\title{On the Turing model complexity of interior point methods for semidefinite programming}
\author{Etienne de Klerk}
\address{E.~de Klerk, Department of Econometrics and Operations Research, Faculty
  of Economic Sciences, Tilburg University,  5000 LE Tilburg, The Netherlands}
\email{E.deKlerk@uvt.nl}
\author{Frank Vallentin}
\address{F.~Vallentin, Mathematisches Institut, Universit\"at zu
  K\"oln, Weyertal~86--90, 50931 K\"oln, Germany}
\email{frank.vallentin@uni-koeln.de}
\thanks{The second author was partially supported by VIDI grant 639.032.917 from the Netherlands Organization for Scientific Research (NWO)}
\keywords{Semidefinite programming, interior point method, Turing model complexity, Ellipsoid method}
\date{July 16, 2015}
\newcommand{\R}{\mathbb{R}}
\newtheorem{defin}{Definition}[section]
\newtheorem{definition}[defin]{Definition}
\newtheorem{theorem}[defin]{Theorem}
\newtheorem{corollary}[defin]{Corollary}
\newtheorem{lemma}[defin]{Lemma}
\DeclareMathOperator{\val}{val}
\DeclareMathOperator{\trace}{{\rm Trace}}
\DeclareMathOperator{\size}{{\rm size}}
\DeclareMathOperator{\sym}{{\rm sym}}
\begin{document}

\maketitle

\markboth{E.~de Klerk, F.~Vallentin}{On the Turing model complexity of interior
point methods for semidefinite programs}

\begin{abstract}
  It is known that one can solve semidefinite programs to within fixed
  accuracy in polynomial time using the ellipsoid method (under some
  assumptions). In this paper it is shown that the same holds true
  when one uses the short-step, primal interior point method.  The
  main idea of the proof is to employ Diophantine approximation at
  each iteration to bound the intermediate bit-sizes of iterates.
\end{abstract}

\section{Introduction}
\label{sec:introduction}

Semidefinite programming is used in several polynomial-time
algorithms, like the celebrated Goemans-Williamson \cite{goe95}
approximation algorithm for the maximum cut problem, the algorithm for
computing the stability number of a perfect graph
\cite{GroetschelLovaszSchrijver1981a}, and many others (see e.g.\
\cite{Gartner2012}).  To give a rigorous proof of the polynomial-time
complexity of such algorithms, one requires a known theorem, due to
Gr\"otschel, Lov\'asz, and Schrijver
\cite{GroetschelLovaszSchrijver1981a}, on the Turing model complexity
of solving semidefinite programs to fixed precision (under some
assumptions).  In \cite{GroetschelLovaszSchrijver1981a}, this theorem
is proved constructively by using the ellipsoid method of Yudin and
Nemirovski \cite{Yudin-Nemirovski 1976} (inspired by the earlier proof
of Khachiyan \cite{Khacijan1} of the polynomial-time solvability of
linear programming), but our aim here is to do so by using the theory
of interior point methods.  Perhaps surprisingly, such a proof has not
yet been given to the best of the authors' knowledge.

For example, in Chapter 2 of the recent book \cite{Gartner2012} it is
stated that:
\begin{quote} [...] the ellipsoid method is the only known method that
  provably yields polynomial runtime [for semidefinite programming] in
  the Turing machine model [...]
\end{quote}

The complexity theorem in question may be stated as follows.

\begin{theorem}[Gr\"otschel, Lov\'asz, Schrijver \cite{GroetschelLovaszSchrijver1981a}]
\label{th:sdppolynomialtime}
Consider the semidefinite program
\begin{equation}
\label{eq:sdp}
\begin{array}{rl}
\val = \inf & \langle C,X \rangle\\
& X \in \mathcal{S}^n \text{ is positive semidefinite},\\
& \langle A_j, X \rangle = b_j \text{ for } j = 1, \ldots, m,\\
\end{array}
\end{equation}
with rational input $C$, $A_1, \ldots, A_m$, and $b_1, \ldots, b_m$,
and where $\mathcal{S}^n$ denotes the set of $n\times n$ symmetric
matrices. Denote by
\[
\mathcal{F} = \{X \in \mathcal{S}^n : X \text{ is positive
  semidefinite, }  \langle A_j, X \rangle = b_j \text{ for } j = 1, \ldots, m\}
\]
the set of feasible solutions. Suppose we know a rational point
$X_0 \in \mathcal{F}$ and positive rational numbers $r$, $R$ so that
\[
X_0 + B(X_0,r) \subseteq \mathcal{F} \subseteq X_0 + B(X_0,R),
\]
where $B(X_0,r)$ is the ball of radius $r$, centered at $X_0$, in the $d$-dimensional subspace
\[
L = \{X \in \mathcal{S}^n : \langle A_j, X \rangle= 0 \text{ for } j = 1,
\ldots, m\}.
\]
For every positive rational number $\epsilon>0$ one can find in
polynomial time a rational matrix $X^* \in \mathcal{F}$ such that
\[
\langle C, X^* \rangle - \val \leq \epsilon,
\]
where the polynomial is in $n$, $m$, $\log_2 \frac{R}{r}$,
$\log_2(1/\epsilon)$, and the bit size of the data $X_0$, $C$,
$A_1, \ldots, A_m$, and $b_1, \ldots, b_m$.
\end{theorem}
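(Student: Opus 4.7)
\medskip
\noindent\textbf{Proof plan.}
The plan is to run a standard short-step primal interior point method on (\ref{eq:sdp}) with the logarithmic barrier $f(X)=-\log\det X$, and at every iteration to replace the iterate by a nearby feasible rational matrix of controlled bit-size, obtained via simultaneous Diophantine approximation. The real-arithmetic analysis of the method is classical, so the novelty lies in proving that the approximations can be done at polynomial cost while preserving both primal feasibility and closeness to the central path.

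First I would use the data $X_0,r,R$ to initialize the method near the analytic center of $\mathcal{F}$. For a barrier parameter $\mu>0$, let $X(\mu)$ denote the minimizer of $\langle C,X\rangle+\mu f(X)$ over $\mathcal{F}$. Starting from $X_0$ one decreases $\mu$ geometrically by a factor $(1-\gamma/\sqrt{n})$ for a suitable constant $\gamma$, and after each decrease performs a single projected Newton step on the current barrier problem. Standard short-step analysis then shows that the proximity measure stays within the quadratic convergence region of Newton's method, and that $\langle C,X_k\rangle-\mathrm{val}\le\epsilon$ after $N=O(\sqrt{n}\log(nR/(r\epsilon)))$ iterations; the initial $\mu_0$ is polynomially bounded in the input data using the ball condition $X_0+B(X_0,r)\subseteq\mathcal{F}\subseteq X_0+B(X_0,R)$. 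This part is done in exact arithmetic: since the Newton system is linear with rational coefficients, every iterate lies in $\mathcal{F}\cap\mathbb{Q}^{n\times n}$.

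The main work is to control the bit-size of these iterates. I would fix a rational basis $E_1,\dots,E_d$ of the subspace $L$, of polynomial bit-size, and write each iterate as $X_k=X_0+\sum_{i=1}^d y^{(k)}_i E_i$ with $y^{(k)}\in\mathbb{Q}^d$. After each Newton step I apply simultaneous Diophantine approximation to the coordinate vector $y^{(k+1)}$: for a suitably large integer $Q$ (polynomial in the input data and in $1/\epsilon$) one finds $\tilde y\in\mathbb{Q}^d$ with common denominator at most $Q^d$ satisfying $\|y^{(k+1)}-\tilde y\|_\infty\le 1/Q$, and sets $\tilde X_{k+1}:=X_0+\sum_i\tilde y_i E_i$. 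By construction $\tilde X_{k+1}\in X_0+L=\{X:\langle A_j,X\rangle=b_j\}$, so equality constraints are preserved exactly, and the bit-size of $\tilde X_{k+1}$ is polynomial. Choosing $Q$ large enough that $\|X_{k+1}-\tilde X_{k+1}\|$ is small compared to the width of the Dikin ellipsoid at $X_{k+1}$, the rounded iterate is still positive definite and still lies in the quadratic-convergence neighborhood of $X(\mu_{k+1})$; hence the short-step analysis continues to apply, but now on the sequence $\tilde X_k$.

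The main obstacle I expect is quantitative: I need to show that a polynomially large $Q$ (in $n$, $m$, $\log(R/r)$, $\log(1/\epsilon)$, and the input bit-size) already suffices for the perturbation $\tilde X_{k+1}-X_{k+1}$ to stay within the proximity tolerance of the short-step method in the local $\nabla^2 f(X_{k+1})$ norm. This requires lower bounds on the smallest eigenvalue of iterates along the central path neighborhood (to convert Frobenius-norm bounds on the rounding error into Hessian-norm bounds) and a careful combination of the standard Roos--Terlaky--Vial style proximity analysis with the added perturbation, ensuring the errors do not accumulate over the $O(\sqrt{n}\log(1/\epsilon))$ iterations. Once this quantitative link between Diophantine approximation quality and IPM proximity is established, polynomial-time solvability in the Turing model follows directly.
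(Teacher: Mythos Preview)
Your proposal is correct and follows essentially the same route as the paper: short-step primal IPM with the $-\log\det$ barrier, exact-arithmetic Newton steps, and Diophantine rounding of the coordinates in a basis of $L$ after each step so that feasibility is preserved exactly and the iterate stays in the proximity region. The paper resolves your ``main obstacle'' by bounding $\|S(\eta)\|_F$ on the dual central path (equivalent, via $X(\eta)S(\eta)=\eta^{-1}I$, to the lower bound on $\lambda_{\min}$ you propose), uses componentwise continued-fraction approximation rather than simultaneous/LLL-based approximation (simpler and already sufficient), and inserts one extra centering step before rounding to create enough slack in the proximity constants; otherwise the architecture matches yours.
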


Here $\langle X, Y \rangle = \trace(XY)$ denotes the trace inner
product for symmetric matrices, and hence, when we talk about the
ball $B(X_0,r)$ or $B(X_0,R)$ we work with the associated Frobenius norm
\[
\|X\|_F = \langle X, X \rangle^{1/2}.
\]

We will show that the analysis by Renegar \cite{Renegar2001a} of the
short step interior point algorithm, together with applying
Diophantine approximation at every step to ensure that the bit size
stays small, leads to a proof of Theorem \ref{th:sdppolynomialtime}.

There is also a practical aspect to the results in this paper.
Semidefinite programming is increasingly used in computer-assisted
proofs.  Thus new theoretical results have been obtained in this way
for binary code sizes \cite{Schrijver04}, crossing numbers of graphs
\cite{KlePasSch:02}, binary sphere packings \cite{Vallentin2014}, and
other problems.  To obtain rigorous proofs, it is necessary to give a
formal verification of the relevant semidefinite programming bound.
Usually this is done by computing dual bounds using floating point
arithmetic, and then showing rigorously that the corresponding dual
solutions are feasible. This type of ``reverse engineering'' can be
quite cumbersome; see e.g.\ the discussion in \cite[Section 5.3]{Vallentin2014}.
Moreover, the semidefinite programs involved are often numerically
ill-conditioned, and it may be difficult or impossible to obtain a
near-optimal solution with off-the-shelf solvers; see e.g.\
\cite{Mittelmann2010}. It is therefore of practical interest to
understand what may be done in polynomial time when using exact
arithmetic. We note that there already exists an arbitrary precision
solver, SDPA-GMP (see \cite{SDPA} and the references therein) that
uses the GNU multi-precision linear algebra library.  The algorithmic
ideas presented here may potentially be used to enhance such a solver
to improve its performance, by ensuring that it runs in polynomial
time, i.e.\ that the intermediate bit-sizes do not become excessively
large.

Finally, one should note that there have been several papers studying the
complexity of interior point methods using {\em finite precision
  arithmetic} (allowing only a fixed number of bits for calculations);
see e.g.\ \cite{Vera,Wright,Gu}.  For the Turing model complexity
though, the only results known to us concern interior point methods
for linear programming; see e.g.\ the original paper by Karmarkar
\cite{int:Karmarkar2}, or the review in the book of Wright
\cite{int:Swright}.

\section{Preliminaries}

In this section we set up the notation for the paper. Since we follow
Renegar's proof we mainly use his notation.

\subsection{SDP problem structure and notation}

\begin{itemize}
\item
We will denote matrices (and matrix variables) by capital letters, and general vectors (or variables) by lower case letters.
\item By $\mathcal{S}^n$ we denote the $\binom{n+1}{2}$-dimensional
  vector space of symmetric matrices which is endowed with the trace inner
  product $\langle X, Y \rangle = \trace(XY)$. The corresponding norm
  is the Frobenius norm
\[
\|X\|_F = \langle X, X \rangle^{1/2} = \sum_{i=1}^n \lambda_i(X)^2,
\]
where $\lambda_i(X)$ is the $i$-th largest eigenvalue of the symmetric
matrix $X$. By $\mathcal{S}^n_{\succeq 0}$ we denote the closed convex cone of
positive semidefinite matrices, and $\mathcal{S}^n_{\succ 0}$ is the
open cone of positive definite matrices.
If the matrix size is clear from the context, we will sometimes write $X \succ 0$ (resp.\ $X\succeq 0$) instead of
$X \in \mathcal{S}^n_{\succ 0}$ (resp.\ $X \in \mathcal{S}^n_{\succeq 0}$).
\item The semidefinite program \eqref{eq:sdp} defines the linear
  operator $A \colon \mathcal{S}^n \to \R^m$ componentwise by
\[
(AX)_j = \langle A_j, X \rangle, \quad \text{with} \quad j = 1,
\ldots, m.
\]
Its adjoint operator $A^* \colon \R^m \to \mathcal{S}^n$ is
\[
A^* y = \sum_{j=1}^m y_j A_j,
\]
where we take the adjoint with respect to the trace inner
product. From now on we assume that $A$ is surjective. Hence, the
adjoint $A^*$ is injective, and the matrices $A_1, \ldots, A_m$ are
linearly independent.

The kernel of $A$ is the linear subspace
\[
L = \ker A = \{X \in \mathcal{S}^n : AX = 0\}
\]
and the matrices $A_1, \ldots, A_m$ form a basis of the orthogonal
complement~$L^\perp$. The orthogonal projection onto the subspace
$L$ is given by
\[
\pi_L = I_{\mathcal{S}^n} - A^*(A A^*)^{-1} A,
\]
where $I_{\mathcal{S}^n}$ is the identity operator for ${\mathcal{S}^n}$.
\item
We may (and will) assume that $C \in L$, without loss of generality.
Indeed, every feasible $X \in F$ may be written as $X = X_0 + \Delta X$ for some $\Delta X \in L$, so that
\begin{eqnarray*}
\langle C,X\rangle &=& \langle C,X_0\rangle + \langle C,\Delta X\rangle \\
&=& \langle C-\pi_L(C)+\pi_L(C),X_0\rangle + \langle \pi_L(C),\Delta X\rangle \\
&=&  \langle C-\pi_L(C),X_0\rangle + \langle \pi_L(C), X\rangle.
\end{eqnarray*}
Thus we may replace $C$ by $\pi_L(C)$ if necessary.
Moreover, the bit-size of $\pi_L(C)$ is bounded by a  polynomial in the bit-size of $C$ and $A$, due to Theorem \ref{th:linalg} below.
\end{itemize}

\subsection{Polynomial-time operations }

For ease of reference, we will use the framework in the book of
Schrijver \cite{Schrijver LP ILP book} when discussing complexity. In
particular, we use the same definition for the bit-size of rational
numbers, vectors and matrices as in \cite[\S2.1]{Schrijver LP ILP
  book}, and we will denote bit-size by $\size(\cdot)$.  In
particular, for relatively prime $p,q \in \mathbb{Z}$, we define the
bit-size of the rational number $p/q$ as:
\[
\size(p/q) = 1 + \lceil\log_2 |p| + 1\rceil + \lceil\log_2 |q| + 1\rceil.
\]
The bit size of a rational vector $(p_1/q_1,\ldots,p_n/q_n)$ is
defined as the sum of the bit sizes of its components plus $n$.
Similarly, the bit size of an $m\times n$ matrix is defined as the sum
of the bit sizes of its components plus $m\times n$.

\subsubsection*{Diophantine approximation}

We will perform a ``rounding'' procedure at the end of each iteration to
reduce the bit-size of the iterate, and will use {\em Diophantine
  approximation} for this.

\begin{theorem}[cf.\ Corollary 6.2a in \cite{Schrijver LP ILP book}]
\label{th:Euclid}
Let $\alpha$ and $0 < \epsilon \le 1$ be given rational numbers.
Then one may find, in time polynomial in the bit size of $\alpha$, integers $p$ and $q$ such that
\[
\left| \alpha - \frac{p}{q}\right| < \frac{\epsilon}{q} \mbox{ and } 1 \le q \le \frac{1}{\epsilon}, \; |p| \le \lceil |\alpha| \rceil q.
\]
\end{theorem}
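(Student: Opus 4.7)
My plan is to produce $(p,q)$ as a convergent of the continued fraction expansion of $\alpha$, generated by the Euclidean algorithm. Without loss of generality assume $\alpha \geq 0$ (otherwise apply the procedure to $-\alpha$ and negate $p$ at the end). Write $\alpha = a/b$ in lowest terms with $b \geq 1$, and run the Euclidean algorithm on the pair $(a,b)$; this produces a finite sequence of partial quotients $a_0, a_1, \ldots, a_N$ from which the convergents $p_k/q_k$ are built via the standard recurrences
\[
p_k = a_k p_{k-1} + p_{k-2}, \qquad q_k = a_k q_{k-1} + q_{k-2},
\]
with $(p_{-2},q_{-2}) = (0,1)$ and $(p_{-1},q_{-1}) = (1,0)$.

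The critical choice is the stopping index: halt at the largest $k$ for which $q_k \leq 1/\epsilon$, or at $k = N$ if this is reached first, and output $(p,q) := (p_k, q_k)$. Two classical facts about continued fractions then deliver the required inequalities. First, the $q_k$ form a strictly increasing sequence of positive integers, so by construction $1 \leq q \leq 1/\epsilon$. Second, for $k < N$ the standard convergent bound gives $|\alpha - p_k/q_k| < 1/(q_k q_{k+1})$, and since the stopping rule forces $q_{k+1} > 1/\epsilon$, this becomes $|\alpha - p/q| < \epsilon/q$, as required; the case $k = N$ is trivial since then $p/q = \alpha$.

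The magnitude bound $|p| \leq \lceil |\alpha| \rceil q$ follows from the main inequality combined with the integrality of $p$: one has $|p|/q \leq |\alpha| + \epsilon/q \leq |\alpha| + 1$, and a routine case analysis (handling $\alpha \in \mathbb{Z}$ separately, where $p = \alpha q$ works directly) yields the claim. For the complexity, the Euclidean algorithm on $(a,b)$ terminates in $O(\log b) = O(\size(\alpha))$ iterations, and every $p_k, q_k$ has bit-size polynomially bounded in $\size(\alpha)$, since $q_k$ divides $b$ and $|p_k| \leq (|\alpha|+1)q_k$. Hence each arithmetic operation, including the comparison $q_{k+1}\epsilon > 1$ needed to test the stopping criterion, can be carried out in time polynomial in $\size(\alpha)$. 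The main technical obstacle is not any single step but the consistent bookkeeping of these bit-size bounds across all iterations, a standard but indispensable consequence of continued fraction theory.
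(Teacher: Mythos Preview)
Your approach via continued-fraction convergents is exactly what the paper points to: the paper does not give its own proof of this theorem but simply cites Corollary~6.2a of Schrijver's book and remarks that ``the underlying algorithm is the continued fraction method.'' Your write-up fills in precisely that argument.

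One small slip: you assert that $q_k$ divides $b$, which is false in general (e.g.\ for $\alpha = 3/7$ the convergent $1/2$ has $q_1 = 2 \nmid 7$). The correct and sufficient statement is that $q_k \le q_N = b$, since the $q_k$ are increasing and the final convergent equals $a/b$; this already gives the polynomial bit-size bound you need. Also, the derivation of $|p| \le \lceil |\alpha| \rceil q$ can be made cleaner than your ``routine case analysis'': from $|\alpha - p/q| < \epsilon/q$ one gets $|p| < |\alpha|\,q + \epsilon \le \lceil |\alpha| \rceil q + 1$, and since both $|p|$ and $\lceil |\alpha| \rceil q$ are integers, the strict inequality forces $|p| \le \lceil |\alpha| \rceil q$ directly.
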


The underlying algorithm is the {\em continued fraction method}; see
page 64 in \cite{Schrijver LP ILP book} for a description of the
algorithm.

As an immediate corollary, one may approximate a rational vector
$\alpha \in \mathbb{Q}^n$ componentwise by a rational vector
$(p_1/q_1,\ldots,p_n/q_n)$ such that
\begin{equation}
\label{eq:Diophantine}
\left\| (\alpha_1,\ldots,\alpha_n) - \left(\frac{p_1}{q_1},\ldots,\frac{p_n}{q_n}\right)\right\|_2 < \epsilon\sum_{i=1}^n\frac{1}{ q_i}, \; \forall i:\;
 1 \le q_i \le \frac{1}{\epsilon}, \; |p_i| \le \lceil |\alpha_i| \rceil q_i,
\end{equation}
in time polynomial in the bit-size of the vector $\alpha$.

We restate this result in a form that we will need later.

\begin{corollary}
\label{cor:Diophantine}
Given a rational vector $\alpha \in \mathbb{Q}^n$ and rational $\epsilon >0$,
one may compute in  time polynomial in $\size(\alpha)$ integers $p_1,\ldots,p_n$ and $q_1,\ldots,q_n$
such that
\begin{equation}
\label{eq:Diophantine2}
\left\| (\alpha_1,\ldots,\alpha_n) - \left(\frac{p_1}{q_1},\ldots,\frac{p_n}{q_n}\right)\right\|_2 < \epsilon,
\end{equation}
such that
\[
\size(p_1/q_1,\ldots,p_n/q_n) \le n\left(6+\log_2 \left(\frac{n^2\lceil \|\alpha\|_\infty \rceil}{\epsilon^2}\right)\right).
\]
\end{corollary}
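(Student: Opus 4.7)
The plan is to apply Theorem \ref{th:Euclid} (equivalently, the estimate \eqref{eq:Diophantine}) componentwise to the vector $\alpha$, with a rescaled accuracy parameter chosen so that the sum over $i$ of the per-coordinate errors is bounded by $\epsilon$. Concretely, I would set $\epsilon' := \epsilon/n$ and, assuming first that $\epsilon' \le 1$, invoke Theorem \ref{th:Euclid} on each $\alpha_i$ with accuracy $\epsilon'$. This produces integers $p_i,q_i$ satisfying $|\alpha_i - p_i/q_i| < \epsilon'/q_i$, $1 \le q_i \le 1/\epsilon' = n/\epsilon$, and $|p_i| \le \lceil |\alpha_i|\rceil q_i \le \lceil \|\alpha\|_\infty\rceil \, n/\epsilon$. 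Each component is produced in time polynomial in $\size(\alpha_i)$ by Theorem \ref{th:Euclid}, so the overall procedure runs in time polynomial in $\size(\alpha)$.

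For the error bound, the estimate \eqref{eq:Diophantine} from the remark following Theorem \ref{th:Euclid} gives
\[
\left\| \alpha - (p_1/q_1,\ldots,p_n/q_n)\right\|_2 \;<\; \epsilon' \sum_{i=1}^n \frac{1}{q_i} \;\le\; n\,\epsilon' \;=\; \epsilon,
\]
since $q_i \ge 1$ for every $i$, which verifies \eqref{eq:Diophantine2}.

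For the bit-size bound, I would just plug the bounds on $|p_i|$ and $|q_i|$ into the definition of $\size(p/q)$. Using $|p_i| \le \lceil\|\alpha\|_\infty\rceil\, n/\epsilon$ and $|q_i|\le n/\epsilon$ gives
\[
\size(p_i/q_i) \;\le\; 1 + \bigl(\log_2(\lceil \|\alpha\|_\infty\rceil n/\epsilon) + 2\bigr) + \bigl(\log_2(n/\epsilon) + 2\bigr) \;=\; 5 + \log_2\!\Bigl(\tfrac{n^2\lceil \|\alpha\|_\infty\rceil}{\epsilon^2}\Bigr),
\]
and then summing over $i=1,\ldots,n$ and adding the extra $n$ from the vector size convention yields exactly $n\bigl(6+\log_2(n^2\lceil\|\alpha\|_\infty\rceil/\epsilon^2)\bigr)$.

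Finally, the edge case $\epsilon' = \epsilon/n > 1$ (i.e.\ $\epsilon > n$) must be disposed of separately because Theorem \ref{th:Euclid} requires $\epsilon' \le 1$; here one simply chooses $p_i/q_i$ to be the nearest integer to $\alpha_i$, which makes the coordinatewise error at most $1/2$ and hence the $\ell_2$ error at most $\sqrt{n}/2 < n/2 < \epsilon$, with $q_i=1$ and $|p_i|\le \lceil\|\alpha\|_\infty\rceil+1$, trivially satisfying the bit-size bound. No step here is really an obstacle; the only thing to get right is the choice $\epsilon'=\epsilon/n$, which is forced by the factor $\sum_i 1/q_i \le n$ coming out of the componentwise bound \eqref{eq:Diophantine}, and which in turn is what produces the $n^2$ (rather than $n$) inside the logarithm.
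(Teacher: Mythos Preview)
Your proof is correct and follows essentially the same route as the paper: apply Theorem~\ref{th:Euclid} componentwise with accuracy $\epsilon/n$, use $\sum_i 1/q_i \le n$ to get the $\ell_2$ error bound, and plug the resulting bounds on $|p_i|,|q_i|$ into the definition of $\size$. The only cosmetic difference is that the paper first carries out the size computation with parameter $\epsilon$ and substitutes $\epsilon \to \epsilon/n$ at the very end, whereas you rescale at the outset; your extra discussion of the case $\epsilon/n > 1$ is more than the paper offers (the paper tacitly assumes $\epsilon/n \le 1$), though note that in that regime the stated size bound can itself become vacuous, so ``trivially satisfying the bit-size bound'' should be read with that caveat.
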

\proof
Assume the integers $p_i,q_i$ ($i \in \{1,\ldots,n\}$)  satisfy \eqref{eq:Diophantine}.
For each $i$ one has
\[
|p_i| \le  \lceil |\alpha_i|\rceil q_i \le \lceil \|\alpha\|_\infty \rceil q_i \le \lceil \|\alpha\|_\infty \rceil \frac{1}{\epsilon}.
\]
Thus
\begin{eqnarray*}
\size(p_i/q_i) &=& 1 + \lceil\log_2 |p_i| + 1\rceil + \lceil\log_2 |q_i| + 1\rceil \\
&\le& 1 + \left\lceil\log_2  \frac{ \lceil \|\alpha\|_\infty \rceil}{\epsilon} + 1\right\rceil + \left\lceil\log_2 \frac{1}{\epsilon} + 1\right\rceil \\
&\le & 5 + \log_2  \frac{ \lceil \|\alpha\|_\infty \rceil}{\epsilon^2}.
\end{eqnarray*}
As a consequence
\[
\size(p_1/q_1,\ldots,p_n/q_n) = n + \sum_{i=1}^n \size(p_i/q_i) \le n\left(6+\log_2 \left(\frac{\lceil \|\alpha\|_\infty \rceil}{\epsilon^2}\right)\right).
\]
Using \eqref{eq:Diophantine}, $\sum_{i=1}^n\frac{1}{ q_i} \le n$, and replacing $\epsilon$ by $\epsilon/n$  completes the proof. \qed

\subsubsection*{Linear algebra}

Each iteration of the short-step interior point algorithm involves
some linear algebra operations, and we will use the following results
to ensure that this may be done in polynomial time.

\begin{theorem}
\label{th:linalg}
The following operations on matrices may be performed in polynomial time (in the bit sizes of the matrices and vectors):
\begin{enumerate}
\item
Matrix addition and multiplication;
\item
Matrix inversion;
\item
Solving linear systems with Gaussian elimination;
\item
Computing an orthogonal basis (using Gaussian elimination and Gram-Schmidt orthogonalization) of a nullspace $\{x \; : \; Ax = 0\}$ where
the rational matrix $A$ is given.
\end{enumerate}
\end{theorem}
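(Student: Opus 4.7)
The plan is to reduce each of the four items to well-known facts about the complexity of exact rational arithmetic, the key non-trivial ingredient being Edmonds' classical analysis of Gaussian elimination that is documented in detail in Schrijver \cite{Schrijver LP ILP book}.

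For item (1), I would argue directly. Given two $n \times n$ rational matrices $A,B$ whose entries have bit-size bounded by $\sigma$, each entry of $A+B$ is a single rational addition and thus of bit-size $O(\sigma)$, while each entry of $AB$ is the sum of $n$ rational products and, after reducing to a common denominator, has bit-size $O(n\sigma)$. Hence both operations are clearly polynomial in the bit-size of the inputs.

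For items (2) and (3), the main obstacle — and indeed the main technical content of the theorem — is that a naive implementation of Gaussian elimination can cause exponential blow-up in the bit-sizes of intermediate fractions. The remedy is Edmonds' observation that after $k$ pivoting steps every entry of the intermediate tableau, reduced to lowest terms, can be written as a quotient of two $(k+1)\times (k+1)$ subdeterminants of the original (augmented) matrix. By Hadamard's inequality these subdeterminants have bit-size bounded by a polynomial in the input size, so reducing fractions to lowest terms (via the Euclidean algorithm) at each step keeps all intermediate numbers polynomially bounded. This is exactly the statement proved as Corollary 3.3a in \cite{Schrijver LP ILP book} and yields a polynomial-time algorithm for solving linear systems; matrix inversion then follows by applying Gaussian elimination to the augmented matrix $[A \mid I]$.

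For item (4) I would first compute a basis of $\ker A = \{x : Ax=0\}$ by putting $A$ into reduced row echelon form, reading off the pivot and free columns, and writing down the standard spanning vectors of the nullspace associated with the free variables; by the argument above this takes polynomial time and produces a basis of polynomially bounded bit-size. I would then apply Gram--Schmidt orthogonalization, which consists only of rational inner products, scalar divisions, and vector subtractions of the form $v_k \mapsto v_k - \tfrac{\langle v_k, u_j\rangle}{\langle u_j, u_j\rangle}\, u_j$; each such step has the same complexity profile as a pivot of Gaussian elimination, and the same subdeterminant / Hadamard-type bound (see \S6.3 of \cite{Schrijver LP ILP book}) shows that the resulting orthogonal basis vectors have polynomial bit-size. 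The main obstacle throughout is identical and is handled in exactly the same way: once Edmonds' subdeterminant representation is in hand, bit-size control for all four operations is a routine consequence of Hadamard's inequality.
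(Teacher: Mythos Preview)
Your proposal is correct and in fact more detailed than the paper's own treatment: the paper does not give an argument at all but simply refers the reader to Theorem~3.3 and Corollary~3.3a in Schrijver~\cite{Schrijver LP ILP book}, which is precisely the source you invoke for the Edmonds/Hadamard subdeterminant bound underlying items~(2)--(4). In other words, you have unpacked exactly the reference the paper cites, so the approaches coincide.
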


For a proof, see e.g.\  Theorem 3.3 and Corollary 3.3a in \cite{Schrijver LP ILP book}.

The last item implies that we may compute an orthogonal basis for $L$
(the nullspace of $A$), so that we may represent any feasible point
$X \in \mathcal{F}$ as $X = X_0 + \sum_{i=1}^d x_i B_i$, say, where
the $x_i$ are scalars and the $B_i$'s are suitable symmetric matrices
of size polynomial in the input size that form an orthogonal basis for
$L$.  We may also assume without loss of generality that
$\|B_i\|_F \le 1$ for each $i$. This is important, since we will study
perturbations (roundings) of the form $\bar X = X + \Delta X$, where
$X \in \mathcal{F}$ and $\Delta X \in L$.  Writing
$\Delta X = \sum_{i=1}^d \Delta x_i B_i$, one then has
$\|\Delta X\|_F \le \|\Delta x\|_2$. In other words, we may bound the
size of the perturbation in $\mathcal{S}^n$ in terms of the
corresponding perturbation in $\mathbb{R}^d$.

\subsection{Self-concordant barrier functions}

We will use the definition of self-con\-cordant functions due to Renegar
\cite{Renegar2001a}, that is more suited to our purposes than the
original definition of Nesterov and Nemirovski \cite{int:Nesterov5}.
In what follows, $f$ is a convex functional with open convex domain
$D_f$ (contained in a finite-dimensional, real affine space), and the
gradient and Hessian of $f$ at $x \in D_f$ will be denoted by $g(x)$
and $H(x)$ respectively. Note that the gradient and Hessian depend on
the inner product we choose for the underlying vector space; see \S1.2
and \S1.3 in Renegar \cite{Renegar2001a} for more details.

\begin{definition}[cf.\ \S 2.2.1 in \cite{Renegar2001a}]
Assume $f:D_f \to \mathbb{R}$ (with $D_f$ open and convex) is such that $H(x) \succ 0$ for all $x \in D_f$.
Then $f$ is called self-concordant  if:
\begin{enumerate}
\item
For all $x \in D_f$ one has $B_x(x,1) \subseteq D_f$;
\item
For all $y \in B_x(x,1)$ one has
\[
1 - \|y-x\|_x \le \frac{\|v\|_y}{\|v\|_x} \le \frac{1}{1 - \|y-x\|_x} \mbox{  for all } v \neq 0,
\]
\end{enumerate}
where $\| v \|_x := \langle v, H(x)v\rangle^{\frac{1}{2}}$ is called the intrinsic (or local) norm of $v$, and $B_x(x,1)$ is the unit ball, centered at $x$,
with respect to the intrinsic norm.
\end{definition}

A self-concordant functional $f$ is called a \emph{self-concordant barrier} if there is a finite value $\vartheta_f$ so that
\[
\vartheta_f = \sup_{x \in D_f} \| H^{-1}g(x)\|_x,
\]
that is, the intrinsic norm (at $x$) of the Newton step
$n(x) := -H(x)^{-1}g(x)$ is always upper bounded by $\vartheta_f$. The
{\em analytic center} of $D_f$ is defined as the (unique) minimizer of
$f$. (The analytic center exists if and only if $D_f$ is bounded.)

The self-concordant barrier function of the semidefinite program
\eqref{eq:sdp} is
\begin{equation}
\label{eq:SDP barrier}
f(X) = -\ln \det X \mbox{ with domain }
D_f = \mathcal{S}^n_{\succ 0} \cap \{X \in \mathcal{S}^n : AX = b\}.
\end{equation}
For this barrier function one has $\vartheta_f \le n$; see \cite[\S 2.3.1]{Renegar2001a}.
Its gradient  (with respect to the trace inner product) is
\[
g(X) = -\pi_L(X^{-1}),
\]
and its Hessian is
\[
H(X)Y = \pi_L(X^{-1} Y X^{-1}) \quad \text{with} \quad Y \in L.
\]
The local norm for $Y \in L$ at $X \in D_f$ is defined as
\[
\|Y\|_X = \langle Y, H(X) Y\rangle^{1/2}.
\]
For easy reference, we note that the self-concordance of  the function $f$ in (\ref{eq:SDP barrier}) implies that for all $X \in D_f$ we have
$B_X(X,1) \subseteq D_f$ and that for all $Y \in B_X(X,1)$ we have
\begin{equation}
\label{eq:self-concordance}
1 - \|Y-X\|_X \leq \frac{\|V\|_Y}{\|V\|_X} \leq \frac{1}{1-\|Y -
  X\|_X} \quad \text{for all $V \in L \setminus \{0\}$,}
\end{equation}
where $B_X(Y,r)$ denotes the open ball of radius $r$ centered at $Y$
in the local norm $\|\cdot\|_X$. 

\subsubsection{Properties of self-concordant functions}

We will need the following three technical results (and one corollary) on self-concordant functions.

\begin{theorem}[Theorem 2.2.3 in \cite {Renegar2001a}]
\label{th:2.2.3}
Assume $f $ self-concordant and $x \in D_f$.  If $z$ minimizes $f$ and $z \in B_x(x,1)$ then
\[
x^+ := x - H(x)^{-1}g(x)
\]
satisfies
\[
\|x^+ - z \|_x \le \frac{\|x-z\|_x^2}{1-\|x-z\|_x}.
\]
\end{theorem}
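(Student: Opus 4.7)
The plan is to reduce the bound to an integral inequality via the fundamental theorem of calculus, and then to use the self-concordance property to control the integrand pointwise.

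First, set $u := z - x$ and $r := \|u\|_x$. The hypothesis $z \in B_x(x,1)$ gives $r < 1$, so the segment $\{x+tu : t \in [0,1]\}$ is contained in $B_x(x,1) \subseteq D_f$. Since $z$ is a minimizer, $g(z) = 0$, and applying the fundamental theorem of calculus to $t \mapsto g(x+tu)$ yields
\[
g(x) \;=\; -\int_0^1 H(x+tu)\, u\, dt.
\]
Substituting this into $x^+ = x - H(x)^{-1}g(x)$ and using the identity $z - x = H(x)^{-1}H(x)u$, one obtains after rearrangement the key formula
\[
x^+ - z \;=\; H(x)^{-1}\int_0^1 \bigl[H(x+tu) - H(x)\bigr]\,u\, dt.
\]

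Second, I switch to the intrinsic norm at $x$. Using the dual-norm identity $\|H(x)^{-1}w\|_x = \|w\|_{x,*}$, where $\|w\|_{x,*} := \langle w, H(x)^{-1}w\rangle^{1/2}$, and pulling the norm inside the integral, it suffices to establish
\[
\int_0^1 \bigl\|\bigl[H(x+tu) - H(x)\bigr]u\bigr\|_{x,*}\, dt \;\le\; \frac{r^2}{1-r}.
\]

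Third, I apply the self-concordance inequality. With $y = x+tu$ one has $\|y-x\|_x = tr < 1$, and after conjugating $H(y)$ by $H(x)^{-1/2}$ the inequality (\ref{eq:self-concordance})-style bound translates into the operator inequality
\[
(1-tr)^2 I \;\preceq\; M(t) \;\preceq\; (1-tr)^{-2} I, \qquad M(t) := H(x)^{-1/2}H(x+tu)H(x)^{-1/2}.
\]
Writing $v := H(x)^{1/2}u$ (so $\|v\|_2 = r$) and using $\|[H(x+tu)-H(x)]u\|_{x,*} = \|[M(t)-I]v\|_2$, this gives the pointwise bound
\[
\bigl\|[H(x+tu) - H(x)]u\bigr\|_{x,*} \;\le\; \bigl((1-tr)^{-2} - 1\bigr)\,r.
\]
Finally, the substitution $s = tr$ reduces the integral to the elementary computation
\[
\int_0^1 \bigl((1-tr)^{-2} - 1\bigr)\,r\, dt \;=\; \int_0^r \bigl((1-s)^{-2} - 1\bigr)\, ds \;=\; \frac{r^2}{1-r},
\]
which finishes the proof.

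The main obstacle I expect is the bookkeeping in the third step: extracting from the scalar self-concordance inequality (which controls $\|v\|_y/\|v\|_x$ for a single vector $v$) the operator sandwich on $M(t)$, and then translating it through the dual-norm identity into a clean pointwise bound on $\|[H(x+tu)-H(x)]u\|_{x,*}$. Once this reduction is made, both the derivation of the integral representation of $x^+ - z$ and the closed-form evaluation of the resulting integral are routine.
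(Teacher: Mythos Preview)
Your argument is correct and is essentially the standard proof of this result (the one given in Renegar's monograph). Note, however, that the present paper does not supply its own proof of this theorem: it is stated with the attribution ``Theorem~2.2.3 in \cite{Renegar2001a}'' and then used as a black box, so there is no in-paper proof to compare against.

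For completeness, the only place where your write-up could be tightened is the passage from the scalar self-concordance inequality to the operator sandwich on $M(t)$. You have identified this yourself, and indeed it is routine: since the defining inequality $(1-\|y-x\|_x)\le \|v\|_y/\|v\|_x \le (1-\|y-x\|_x)^{-1}$ holds for every nonzero $v$, squaring and substituting $v = H(x)^{-1/2}w$ gives $(1-tr)^2\|w\|_2^2 \le \langle w, M(t)w\rangle \le (1-tr)^{-2}\|w\|_2^2$ for all $w$, which is exactly the claimed operator inequality because $M(t)$ is self-adjoint. The spectral bound $\|M(t)-I\|_{\mathrm{op}} \le (1-tr)^{-2}-1$ then follows since this quantity dominates $1-(1-tr)^2$, and the rest of your computation goes through verbatim.
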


A useful, and immediate, corollary is the following.

\begin{corollary}
\label{cor:2.2.3}
Under the assumptions of Theorem \ref{th:2.2.3}, one has
\[
\|n(x)\|_x := \|H(x)^{-1}g(x)\|_x \le \frac{\|x-z\|_x}{1-\|x-z\|_x}.
\]
\end{corollary}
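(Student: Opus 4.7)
The plan is to derive the bound by observing that the Newton step displacement $n(x) = -H(x)^{-1}g(x)$ is precisely $x^+ - x$, where $x^+$ is the point defined in Theorem~\ref{th:2.2.3}. This lets us estimate $\|n(x)\|_x$ using the triangle inequality in the local norm $\|\cdot\|_x$, splitting through the minimizer $z$:
\[
\|n(x)\|_x = \|x^+ - x\|_x \le \|x^+ - z\|_x + \|z - x\|_x.
\]

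Next I would apply Theorem~\ref{th:2.2.3} directly to bound the first term by $\frac{\|x-z\|_x^2}{1-\|x-z\|_x}$, so that
\[
\|n(x)\|_x \le \frac{\|x-z\|_x^2}{1-\|x-z\|_x} + \|x-z\|_x.
\]
Putting this over a common denominator, the numerator becomes $\|x-z\|_x^2 + \|x-z\|_x(1 - \|x-z\|_x) = \|x-z\|_x$, which yields the claimed bound $\frac{\|x-z\|_x}{1-\|x-z\|_x}$.

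There is no real obstacle here: the argument is immediate from Theorem~\ref{th:2.2.3} via the triangle inequality, which is exactly why the result is stated as a corollary. The only point to be a little careful about is that the triangle inequality genuinely applies to $\|\cdot\|_x$ (it is a norm induced by the positive definite Hessian $H(x)$), and that the denominator $1 - \|x-z\|_x$ is positive, which is guaranteed by the hypothesis $z \in B_x(x,1)$.
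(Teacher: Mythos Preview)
Your proof is correct and matches the paper's argument essentially line for line: write $n(x) = x^+ - x$, apply the triangle inequality in the local norm through $z$, invoke Theorem~\ref{th:2.2.3} on $\|x^+ - z\|_x$, and simplify over a common denominator. Your closing remarks about $\|\cdot\|_x$ being a genuine norm and the denominator being positive are fine observations but not needed for the argument to go through.
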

\proof
By definition,
\begin{eqnarray*}
\|n(x)\|_x  &=& \|x^+ - x \|_x \\
&\le & \|x^+ - z \|_x + \|z  - x \|_x \\
&\le &  \frac{\|x-z\|_x^2}{1-\|x-z\|_x} +  \|x  - z \|_x \quad \mbox{(by Theorem \ref{th:2.2.3})} \\
&=& \frac{\|x-z\|_x}{1-\|x-z\|_x},
\end{eqnarray*}
as required. \qed

The other two technical results are the following.

\begin{theorem}[Theorem 2.2.4 in \cite {Renegar2001a}]
\label{th:2.2.4}
Assume $f $ self-concordant and $x \in D_f$ such that $\|n(x)\|_x \le 1$.
 Then
\[
\|n(x^+)\|_{x^+} \le \left(\frac{\|n(x)\|_x}{1- \|n(x)\|_x}\right)^2.
\]
\end{theorem}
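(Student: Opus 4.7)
The proof establishes quadratic convergence of the undamped Newton step for a self-concordant function, and the main lever is the self-concordance inequality \eqref{eq:self-concordance}, which controls how the local norm deforms when one moves a small intrinsic distance. Set $\delta := \|n(x)\|_x$ and $h := n(x) = x^+ - x$, so that $\|h\|_x = \delta \le 1$. When $\delta < 1$, the axiom $B_x(x,1) \subseteq D_f$ guarantees $x^+ \in D_f$; the boundary case $\delta = 1$ can be treated by continuity, so below I work with $\delta < 1$.

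The plan has three ingredients. \emph{First}, \eqref{eq:self-concordance} applied with $Y = x^+$ gives
\[
(1-\delta)\,\|v\|_x \;\le\; \|v\|_{x^+} \;\le\; \frac{1}{1-\delta}\,\|v\|_x \quad \text{for all } v \in L \setminus \{0\},
\]
which transfers to the dual local norm $\|\cdot\|_y^{\ast} := \sup\{\langle\cdot,v\rangle : \|v\|_y \le 1\}$ with the bounds interchanged. Since $\|n(y)\|_y = \|g(y)\|_y^{\ast}$, one gets $\|n(x^+)\|_{x^+} \le (1-\delta)^{-1}\,\|g(x^+)\|_x^{\ast}$, and the task reduces to bounding $\|g(x^+)\|_x^{\ast}$. \emph{Second}, the Newton equation $g(x) + H(x)h = 0$ and the fundamental theorem of calculus give
\[
g(x^+) \;=\; g(x) + \int_0^1 H(x+th)\,h\,dt \;=\; \int_0^1 \bigl[H(x+th) - H(x)\bigr]\,h\,dt.
\]
\emph{Third}, I would apply \eqref{eq:self-concordance} along the segment $\{x+th : 0 \le t \le 1\}$ to obtain the operator sandwich $(1-t\delta)^2\,H(x) \preceq H(x+th) \preceq (1-t\delta)^{-2}\,H(x)$ on $L$, turn this into the pointwise estimate
\[
\bigl|\langle v,\,[H(x+th) - H(x)]\,h\rangle\bigr| \;\le\; \left(\frac{1}{(1-t\delta)^2} - 1\right) \|v\|_x\,\|h\|_x
\]
via Cauchy--Schwarz in the intrinsic norm, and integrate, using $\int_0^1 \bigl((1-t\delta)^{-2} - 1\bigr)\,dt = \delta/(1-\delta)$, to obtain $\|g(x^+)\|_x^{\ast} \le \delta^2/(1-\delta)$. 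Combining with the first step yields $\|n(x^+)\|_{x^+} \le \delta^2/(1-\delta)^2$, as required.

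The main obstacle is the third step: converting the operator sandwich along the Newton segment into a sharp dual-norm bound on the integral. A crude triangle-inequality bound on $\|H(x+th)h\|_x^{\ast} + \|H(x)h\|_x^{\ast}$ produces a constant that is too large; instead one must exploit the two-sided operator inequality together with Cauchy--Schwarz in the appropriate intrinsic norm to extract the tight factor $(1-t\delta)^{-2} - 1$. Once this pointwise bound is established, the explicit evaluation of the integral immediately delivers the quadratic convergence estimate claimed in the theorem.
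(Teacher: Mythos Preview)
The paper does not contain a proof of this theorem: it is quoted verbatim as Theorem~2.2.4 from Renegar~\cite{Renegar2001a} and used as a black box, so there is no ``paper's own proof'' to compare against. Your outline is essentially the standard argument (and, up to presentation, the one Renegar gives): reduce $\|n(x^+)\|_{x^+}$ to a dual-norm bound on $g(x^+)$ via the norm comparison at $x^+$, write $g(x^+)=\int_0^1[H(x+th)-H(x)]\,h\,dt$ using the Newton equation, and control the integrand through the two-sided Hessian sandwich $(1-t\delta)^2H(x)\preceq H(x+th)\preceq(1-t\delta)^{-2}H(x)$.

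The one step you flag as the ``main obstacle'' is indeed the only place requiring care, and your resolution is correct but could be stated more explicitly. From $-\alpha H(x)\preceq D\preceq\alpha H(x)$ with $\alpha=(1-t\delta)^{-2}-1$ (the lower bound $(1-t\delta)^2-1$ being stronger than $-\alpha$), one conjugates by $H(x)^{-1/2}$ to get a symmetric operator of spectral norm at most $\alpha$, whence $|\langle v,Dh\rangle|\le\alpha\,\|v\|_x\|h\|_x$; this is exactly your ``Cauchy--Schwarz in the intrinsic norm''. The integral $\int_0^1\bigl((1-t\delta)^{-2}-1\bigr)\,dt=\delta/(1-\delta)$ then gives $\|g(x^+)\|_x^\ast\le\delta^2/(1-\delta)$ and the claim follows. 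One cosmetic remark: for $\delta=1$ the right-hand side of the theorem is $+\infty$, so the inequality is vacuous and no continuity argument is needed; the only content is $\delta<1$, where your argument is complete.
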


\begin{theorem}[Theorem 2.2.5 in \cite {Renegar2001a}]
\label{th:2.2.5}
Assume $f $ self-concordant and $x \in D_f$ such that $\|n(x)\|_x \le 1/4$.
 Then  $f$ has a minimizer $z$ and
\[
\|z-x^+\|_x \le \frac{3\|n(x)\|_x^2}{(1- \|n(x)\|_x)^3}.
\]
Thus (triangle inequality):
\[
\|x - z \|_x \le \|n(x)\|_x + \frac{3\|n(x)\|_x^2}{(1- \|n(x)\|_x)^3}.
\]
\end{theorem}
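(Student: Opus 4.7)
The plan is to build a full Newton sequence starting at $x$ and show it converges to the minimizer of $f$. Concretely, set $x_0 := x$ and define inductively $x_{k+1} := x_k^+ = x_k - H(x_k)^{-1} g(x_k)$. Writing $\alpha := \|n(x)\|_x \le 1/4$ and $\delta_k := \|n(x_k)\|_{x_k}$ (so $\delta_0 = \alpha$), I would first iterate Theorem~\ref{th:2.2.4} to obtain the quadratic recursion
\[ \delta_{k+1} \;\le\; \left(\frac{\delta_k}{1-\delta_k}\right)^{\!2}. \]
Since $\delta_0 \le 1/4$ gives $\delta_0/(1-\delta_0) \le 1/3$, the $\delta_k$ decay doubly exponentially; in particular $\sum_{k \ge 0} \delta_k$ converges, every $\delta_k \le 1/4$, and Theorem~\ref{th:2.2.4} is re-applicable at each step.

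The next step is to transfer these local bounds at $x_k$ to the reference norm at $x$ via self-concordance. Provided $x_k \in B_x(x,1)$, the right-hand inequality in (\ref{eq:self-concordance}) yields $\|v\|_x \le \|v\|_{x_k}/(1-\|x_k-x\|_x)$ for every $v \in L$; applied to $v = x_{k+1}-x_k$, whose local norm at $x_k$ equals $\delta_k$, this gives
\[ \|x_{k+1}-x_k\|_x \;\le\; \frac{\delta_k}{1-\|x_k-x\|_x}. \]
Combined with the triangle inequality $\|x_k-x\|_x \le \sum_{j=0}^{k-1}\|x_{j+1}-x_j\|_x$, a simple induction shows that $\|x_k-x\|_x$ stays strictly less than $1$, so the whole Newton orbit lies in the intrinsic unit ball around $x$ and the transfer inequality applies throughout.

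Once this is in place, $(x_k)$ is Cauchy in $\|\cdot\|_x$ and therefore (by equivalence of norms on a bounded set) also in the ambient Frobenius norm, so it converges to some $z \in D_f$. Because $\delta_k \to 0$ and $H(x_k)$ is uniformly bounded on $B_x(x,1)$ via self-concordance, $g(x_k) \to 0$, whence $g(z)=0$; convexity of $f$ then identifies $z$ as the minimizer, settling the existence claim. The bound on $\|z-x^+\|_x$ finally follows from
\[ \|z-x^+\|_x \;=\; \|z-x_1\|_x \;\le\; \sum_{k \ge 1} \|x_{k+1}-x_k\|_x \;\le\; \sum_{k \ge 1} \frac{\delta_k}{1-\|x_k-x\|_x}, \]
by plugging in the quadratic recursion for $\delta_k$, a uniform upper bound on $\|x_k-x\|_x$, and summing. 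The stated triangle-inequality consequence is immediate from $\|x^+-x\|_x = \|n(x)\|_x = \alpha$.

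The main obstacle is the delicate bookkeeping needed to extract the precise constant $3$ and exponent $(1-\alpha)^3$ in the stated bound: a coarse summation yields the right order $\alpha^2$ but loses the constants. To sharpen, one wants to control the series by using Theorem~\ref{th:2.2.4} explicitly for the leading term $\delta_1 \le \alpha^2/(1-\alpha)^2$, bound all remaining $\delta_k$ ($k\ge 2$) geometrically against $\delta_1$, and absorb the $1-\|x_k-x\|_x$ factors into a single $(1-\alpha)$-factor, so that the whole tail is dominated by $3\alpha^2/(1-\alpha)^3$. Packaging the quadratic-convergence tail neatly is where most of the work of the proof will sit.
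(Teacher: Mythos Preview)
The paper does not prove this statement; it is quoted as Theorem~2.2.5 from Renegar~\cite{Renegar2001a} and used as a black box, so there is no ``paper's own proof'' to compare against. Your outline is essentially Renegar's own argument in \cite{Renegar2001a}: iterate Newton from $x$, use Theorem~\ref{th:2.2.4} for quadratic decay of $\delta_k$, transfer step lengths to the fixed $\|\cdot\|_x$-norm via self-concordance, and sum the tail; the bookkeeping you flag as the ``main obstacle'' (recovering the exact factor $3/(1-\alpha)^3$) is precisely where the work lies in Renegar's proof as well. One minor slip: the inequality $\|v\|_x \le \|v\|_{x_k}/(1-\|x_k-x\|_x)$ that you invoke comes from the \emph{left-hand} side of the self-concordance inequality, not the right-hand side.
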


\section{The short-step, logarithmic barrier algorithm}

We consider a generalisation of our SDP problem, given by
\[
\val := \min_{x \in \mbox{cl}( D_f)} \langle c,x \rangle,
\]
where $c$ is a given vector, $f$ is a self-concordant barrier with
open domain $D_f$, and cl$(D_f)$ denotes the closure of $D_f$.  As
before, the gradient and Hessian of $f$ at $x \in D_f$ are
respectively denoted by $g(x)$ and $H(x)$.

For the SDP problem (\ref{eq:sdp}), $f(X) = - \ln \det (X)$ with
domain $D_f = \{X \succ 0 \; : \; X \in \mathcal{F}\}$, but Algorithm
\ref{alg1} below is valid for a general self-concordant barrier.

Define, for given $\eta > 0$,
\[
f_\eta(x) := \eta \langle c,x \rangle +  f(x),
\]
and denote by  $n_\eta(x) = -H(x)^{-1}(\eta c+ g(x))$ the (projected) Newton direction at $x$ for $f_\eta$.

The analytic curve, parameterized by $\eta > 0$, where $\eta$ is
mapped to the unique minimizer of $f_\eta$, is called the {\em central
  path}.

\begin{algorithm}                      
\caption{Short step algorithm}          
\label{alg1}                           
\begin{algorithmic}                    
    \REQUIRE  an $x_1\in D_f$ and $\eta_1 >0$ such that
$\|n_{\eta_1}(x_1)\|_{x_1} \le \frac{1}{4}$.
     An accuracy parameter $\epsilon > 0$.
    \STATE $k \leftarrow 1$
     \WHILE{$\frac{\vartheta_f}{\eta_k} > \epsilon$}
     \STATE Set $x_{k+1} = x_k + n_{\eta_k}(x_k)$
     \STATE Set $\eta_{k+1} = \left(1+ \frac{1}{8\sqrt{\vartheta_f}}\right)\eta_k$
\STATE $k \leftarrow k+1$.
%
    \ENDWHILE
\end{algorithmic}
\end{algorithm}

The complexity of the short step algorithm is described in the
following theorem, that is originally due to Nesterov and Nemirovski
\cite{int:Nesterov5}.

\begin{theorem}[cf.\ p.\ 47 in \cite{Renegar2001a}]
\label{thm:short step complexity}
The short step algorithm terminates after at most
\[
k = \left\lceil 10\sqrt{\vartheta_f}\ln\left(\frac{7\vartheta_f}{6\eta_1\epsilon}\right) \right\rceil
\]
iterations.
The output is a feasible point $x_k$ such that
\[
\langle c,x_k\rangle - \val  \le  \epsilon.
\]
\end{theorem}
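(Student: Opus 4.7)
The plan is to run the classical Nesterov--Nemirovski path-following analysis, organised around a single inductive invariant preserved across all iterations: that $x_k$ always lies in the quadratic-convergence region of the current barrier, namely
\[
\|n_{\eta_k}(x_k)\|_{x_k} \le 1/4 \quad \text{for all } k.
\]
The initial point $x_1$ satisfies this by hypothesis. Granted the invariant, the geometric growth $\eta_k = \eta_1(1+1/(8\sqrt{\vartheta_f}))^{k-1}$ will give the iteration count, and a standard ``proximity-implies-near-optimality'' estimate on the central path will convert $\vartheta_f/\eta_k$ into the stated duality gap.

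For the inductive step, I split each iteration in two. Since $f_{\eta_k} = \eta_k\langle c,\cdot\rangle + f$ shares its Hessian with $f$, it is self-concordant, and Theorem~\ref{th:2.2.4} applied at $x_k$ gives $\|n_{\eta_k}(x_{k+1})\|_{x_{k+1}} \le ((1/4)/(3/4))^2 = 1/9$; as a by-product $x_{k+1}\in D_f$ because $x_k+n_{\eta_k}(x_k)\in B_{x_k}(x_k,1)\subseteq D_f$. The parameter update $\eta_k\to\eta_{k+1}$ then perturbs the Newton direction by
\[
n_{\eta_{k+1}}(x_{k+1}) - n_{\eta_k}(x_{k+1}) = -(\eta_{k+1}-\eta_k)\,H(x_{k+1})^{-1}c.
\]
The identity $\eta_k H(x_{k+1})^{-1}c = n(x_{k+1}) - n_{\eta_k}(x_{k+1})$, where $n$ denotes the Newton step for the pure barrier $f$, together with the barrier-parameter bound $\|n(x)\|_x \le \sqrt{\vartheta_f}$ and the relative step $(\eta_{k+1}-\eta_k)/\eta_k = 1/(8\sqrt{\vartheta_f})$, yields the short arithmetic estimate
\[
\|n_{\eta_{k+1}}(x_{k+1})\|_{x_{k+1}} \le \tfrac{1}{9} + \tfrac{1}{8\sqrt{\vartheta_f}}\bigl(\sqrt{\vartheta_f}+\tfrac{1}{9}\bigr) \le \tfrac{1}{4},
\]
valid for $\vartheta_f\ge 1$. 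This closes the induction.

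For the output guarantee I would combine the invariant with the standard central-path optimality inequality: Corollary~\ref{cor:2.2.3} and Theorem~\ref{th:2.2.5} bound $\|x_k - z(\eta_k)\|_{x_k}$ by an absolute constant (where $z(\eta_k)$ is the minimizer of $f_{\eta_k}$), and the classical estimate $\langle c,z(\eta)\rangle - \val \le \vartheta_f/\eta$ on the central path propagates to the nearby iterate $x_k$ with only a modest multiplicative slack. The factor $7/6$ in the theorem statement is precisely what absorbs this slack. The iteration count itself is then a one-line calculation: $\ln(1+x)\ge x - x^2/2$ at $x=1/(8\sqrt{\vartheta_f})$ gives $\ln(1+1/(8\sqrt{\vartheta_f}))\ge 1/(10\sqrt{\vartheta_f})$ for $\vartheta_f\ge 1$, so $k = \lceil 10\sqrt{\vartheta_f}\ln(7\vartheta_f/(6\eta_1\epsilon))\rceil$ iterations suffice to drive $\vartheta_f/\eta_k$ below the threshold at which the propagated duality-gap bound falls below $\epsilon$.

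The main obstacle is the central-path optimality inequality invoked in the last paragraph: Theorems~\ref{th:2.2.3}, \ref{th:2.2.4}, and \ref{th:2.2.5} only control the local Newton dynamics and do not by themselves deliver $\langle c,z(\eta)\rangle - \val \le \vartheta_f/\eta$. A self-contained derivation requires the conjugate-barrier machinery of Renegar~\S2.4 (using the optimality condition $\eta c + g(z(\eta))\in L^\perp$ paired against $z(\eta)-X$ for feasible $X$, together with the barrier-parameter inequality $\langle -g(x), x - y\rangle \le \vartheta_f$ for $y\in\mathrm{cl}(D_f)$). For the purposes of this paper I would simply cite the corresponding theorem from \cite{Renegar2001a}.
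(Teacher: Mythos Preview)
The paper does not give its own proof of this theorem; it is stated as a citation of Renegar \cite[p.~47]{Renegar2001a}, and the surrounding remarks merely record the same invariant $\|n_{\eta_k}(x_k)\|_{x_k}\le 1/4$ and the post-Newton bound $\|n_{\eta_k}(x_{k+1})\|_{x_{k+1}}\le 1/9$ (via Theorem~\ref{th:2.2.4}) that you use, referring the reader to \cite[p.~46]{Renegar2001a} for the rest. Your sketch is exactly the Renegar argument the paper is citing, the arithmetic of your inductive step checks out, and you are right that the only non-local ingredient --- the central-path optimality bound $\langle c, z(\eta)\rangle - \val \le \vartheta_f/\eta$ --- must be imported from \cite{Renegar2001a}; one small caveat is that your bound $\|n(x)\|_x\le\sqrt{\vartheta_f}$ uses Renegar's convention $\vartheta_f=\sup_x\|n(x)\|_x^2$, which is the one that makes the theorem's constants correct, whereas the paper's displayed definition of $\vartheta_f$ (without the square) appears to be a slip.
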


Some remarks on the steps in the algorithm.
\begin{itemize}
\item
For the SDP problem (\ref{eq:sdp}), the projected Newton direction is obtained by first solving the following linear system:
\begin{equation}
\label{eq:Newton1}
My = v
\end{equation}
where
\[
M_{ij} = \trace(XA_iXA_j), \; (i,j \in \{1,\ldots,m\})
\]
and
\[
v_i = -b_i + \eta \trace(A_iXCX), \; (i \in \{1,\ldots,m\}).
\]
(We drop the subscript $k$ that refers to the iteration number here for convenience.)
Subsequently, the projected Newton direction is given by
\begin{equation}
\label{eq:Newton2}
n_{\eta}(X) = X(A^*y)X + X - \eta XCX.
\end{equation}
The matrix $M$ is positive definite (and hence nonsingular) under the assumption that $\{A_1,\ldots,A_m\}$ are linearly independent.
One may bound the sizes of $M$ and $v$ in (\ref{eq:Newton1}) as follows:
\begin{eqnarray*}
\size(M_{ij}) &\le & \size(XA_i) + \size(A_jX) \\
                    &\le & n(\size(X)+\size(A_i))+ n(\size(X)+\size(A_j)),
\end{eqnarray*}
so that
\[
\size(M) \le m^2(1 + 2n\size(X)) + 2mn\sum_{i=1}^m \size(A_i).
\]
Similarly,
\[
\size(v) \le m + 2mn\size(X) + mn\size(C) + 2n\sum_{i=1}^m \size(A_i)+\size(b)+m\size(\eta).
\]

As a consequence, the projected Newton direction may be computed in
time polynomial in the bit sizes of $X$, $\eta$ and the data $A$, $b$
and $C$.  Thus one may perform a constant number of iterations in
polynomial time.  We will show how to truncate the current iterate $X$
at the end of each iteration, using Diophantine approximation, in
order to guarantee that the bit-size of the iterates remains suitably
bounded throughout.
\item
The square root $\sqrt{\vartheta_f}$ that appears in the statement of the algorithm may be replaced by any larger number,
 e.g. $\lceil \sqrt{\vartheta_f} \rceil$. The only change to the complexity is that $\sqrt{\vartheta_f}$
 should then be replaced by the corresponding larger value in the statement of Theorem \ref{thm:short step complexity}.
 \item
 By construction, each iterate $x_k$ satisfies $\|n_{\eta_k}(x_k)\|_{x_k} \le \frac{1}{4}$, and after the Newton step one therefore has
 \begin{equation}
 \label{eq:proximity}
 \|n_{\eta_k}(x_{k+1})\|_{x_{k+1}} \le \frac{1}{9},
 \end{equation}
  by Theorem \ref{th:2.2.4}. As a result, after setting $\eta_{k+1} = \left(1+ \frac{1}{8\sqrt{\vartheta_f}}\right)\eta_k$,
 one again has $\|n_{\eta_{k+1}}(x_{k+1})\|_{x_{k+1}} \le \frac{1}{4}$; see \cite[p. 46]{Renegar2001a} for details.
 Since we will apply rounding (using Diophantine approximation) to the iterates later on, we will need to ensure that (\ref{eq:proximity}) still holds after rounding $x_{k+1}$.
\item
An issue that needs to be resolved is the initialization question, i.e.\ finding
$x_1\in D_f$ and $\eta_1 >0$ (of suitable bit size) such that
$\|n_{\eta_1}(x_1)\|_{x_1} \le \frac{1}{4}$. This is addressed in the next section.
\end{itemize}

\section{Initialization}

Assume now --- again in the setting of a general self-concordant barrier $f$ --- that we only know a rational starting point $x' \in D_f$.
We will use a two phase procedure, where we first solve an auxiliary problem to obtain a suitable starting point for the short step algorithm.
The procedure here follows Renegar \cite[\S2.4]{Renegar2001a}.

\subsection*{Auxiliary problem}

For a given parameter $\nu > 0$, we consider the auxiliary problem where we minimize:
\[
f'_\nu(x) := - \nu \langle g(x'),x\rangle + f(x).
\]
Note that $x'$ is {on the central path} of the auxiliary problem and corresponds to $\nu = 1$.

Now use the short step algorithm, {\em reducing} $\nu$ at each iteration via
\[
\nu_{k+1} = \left(1- \frac{1}{8\sqrt{\vartheta_f}}\right)\nu_k.
\]
Remarks:
\begin{itemize}
\item
The central path of the auxiliary problem passes through $x'$ and {converges to
the analytic center of $D_f$} as $\nu \downarrow 0$.
\item
Once $\nu$ is small enough, we may use the current value of $x$
as {a starting point for the original short step algorithm}.
\item
After
 \[
k \ge 10\sqrt{\vartheta_f}\ln\left(\frac{7}{6\epsilon'}\right),
\]
iterations, we have $\nu_k \le \epsilon'$, by Theorem \ref{thm:short step complexity}.
\item
In the SDP case of problem (\ref{eq:sdp}), one has $x' = X_0$ and $g(x') = -\pi_L(X_0^{-1})$, that has bit-size polynomial in the input size, by Theorem \ref{th:linalg}.
\item
A suitable choice for $\epsilon'$ that provides a starting point for the second phase depends on the (Minkowski) symmetry of $D_f$ around $x'$.
\end{itemize}

\begin{definition}[Symmetry of $D$ around $x$]
Let $D$ be a bounded open convex set and $x \in D$.
Let ${\mathcal{L}}(x,D)$ denote the set of lines that pass through $x$.
For any $\ell \in {\mathcal{L}}(x,D)$, let $r(\ell)$ denote the {ratio of the shorter to
the longer line segments} $\ell \cap (D \setminus \{x\})$. Finally define the symmetry of $D$ around $x$ as
\[
\sym(x,D) := \inf_{\ell \in {\mathcal{L}}(x,D)} r(\ell).
\]
\end{definition}

A suitable value for $\epsilon'$ is now given by
\begin{equation}
\label{eq:epsilon prime}
\epsilon' =  \frac{1}{18\vartheta_f (1+1/\sym(x',D_f))}.
\end{equation}
At this point one may start the short step algorithm using $x_1$ equal to the last iterate produced by solving the auxiliary problem, and
\begin{equation}
\eta_1 = \frac{1}{12\|H(x_1)^{-1}c\|_{x_1}} \ge \frac{1}{12}\left(\sup_{x \in D_f} \langle c,x\rangle - \val\right).
\label{eq:eta one}
\end{equation}
See \S2.4 in \cite{Renegar2001a} for more details and proofs.

The combined complexity of this two-phase procedure is given by the
following theorem.  The proof is easily extracted from the proof of
Theorem 2.4.1 in \cite{Renegar2001a}.

\begin{theorem}[{\em cf.} Theorem 2.4.1 in \cite{Renegar2001a}]
\label{thm:2.4.1 in Renegar}
Assume $f \in \mathcal{SCB}$ and $D_f$ bounded. Assume a starting point $x' \in D_f$.
If $0 < \epsilon < 1$, then within
\[
10\sqrt{\vartheta_f}\ln\left(\frac{294\vartheta_f^2}{\epsilon}\left(\frac{1}{1+ \sym(x',D_f)}\right)\right)
\]
iterations, all points $x$ computed thereafter satisfy
\[
\langle c,x\rangle - \val \le
\epsilon\left(\sup_{x \in D_f} \langle c,x\rangle - \val\right).
\]
\end{theorem}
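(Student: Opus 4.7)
The plan is to chain the two applications of Algorithm~\ref{alg1} constituting the two-phase procedure of this section, accounting separately for the iterations needed in each phase and verifying that the transition between them is valid. For \textbf{Phase~1}, starting from $\nu_1 = 1$ the point $x'$ lies exactly on the central path of the auxiliary barrier $f'_{\nu_1}$, so the entry hypothesis $\|n_{\nu_1}(x')\|_{x'} = 0 \le 1/4$ holds trivially. Running the decreasing-$\nu$ variant of Algorithm~\ref{alg1} (with multiplicative update $\nu_{k+1} = (1 - 1/(8\sqrt{\vartheta_f}))\nu_k$) until $\nu_{k_1} \le \epsilon'$, with $\epsilon'$ as in \eqref{eq:epsilon prime}, Theorem~\ref{thm:short step complexity} (adapted to this variant) bounds the iteration count by $\lceil 10\sqrt{\vartheta_f}\ln(7/(6\epsilon'))\rceil = \lceil 10\sqrt{\vartheta_f}\ln(21\vartheta_f(1+1/\sym(x',D_f)))\rceil$.

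The hard step, and the main obstacle, is the \textbf{transition}: one must check that the Phase~1 output $x_1 := x_{k_1}$ together with the choice \eqref{eq:eta one} of $\eta_1$ satisfies the Phase~2 entry hypothesis $\|n_{\eta_1}(x_1)\|_{x_1} \le 1/4$. The argument of \cite[\S2.4]{Renegar2001a} has three pieces. First, since $\nu_{k_1} \le \epsilon'$, Theorem~\ref{th:2.2.5} applied to $f'_{\nu_{k_1}}$ gives a bound on the local-norm distance from $x_1$ to the analytic center $z$ of $D_f$ that is proportional to $\epsilon'$. Second, the definition of $\sym(x',D_f)$ together with self-concordance estimates at $z$ yields, up to a universal constant, a bound of the form $\|H(z)^{-1}c\|_z \le (1+1/\sym(x',D_f))\bigl(\sup_{x \in D_f}\langle c,x\rangle - \val\bigr)$; the symmetry parameter enters because the level set of $\langle c,\cdot\rangle$ through $x'$ must be compared against the opposite side of $z$. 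Third, the self-concordance inequality \eqref{eq:self-concordance} transfers the $\|\cdot\|_z$-bound to $\|\cdot\|_{x_1}$, and the universal constants $18$ and $12$ in \eqref{eq:epsilon prime}--\eqref{eq:eta one} are calibrated precisely so that all contributions combine to at most $1/4$.

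For \textbf{Phase~2}, a second application of Theorem~\ref{thm:short step complexity} with accuracy parameter $\tilde{\epsilon} := \epsilon\bigl(\sup_{x\in D_f}\langle c,x\rangle - \val\bigr)$, combined with the lower bound $\eta_1 \ge 1/\bigl(12(\sup_{x \in D_f}\langle c,x\rangle - \val)\bigr)$ coming from \eqref{eq:eta one}, shows that Phase~2 terminates within $\lceil 10\sqrt{\vartheta_f}\ln(14\vartheta_f/\epsilon)\rceil$ further iterations. Since $\eta_k$ is nondecreasing in Phase~2, each subsequent iterate also satisfies $\langle c,x_k\rangle - \val \le \vartheta_f/\eta_k \le \tilde\epsilon$, producing the ``all points computed thereafter'' clause. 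Adding the Phase~1 and Phase~2 counts and multiplying the arguments of the logarithms gives $21\vartheta_f(1+1/\sym(x',D_f)) \cdot 14\vartheta_f/\epsilon = 294\vartheta_f^2(1+1/\sym(x',D_f))/\epsilon$, matching the bound in the theorem up to the way the symmetry factor is expressed.
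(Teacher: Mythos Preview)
Your proposal is correct and follows exactly the two-phase route the paper sketches in Section~4 and attributes to Renegar~\cite[\S2.4]{Renegar2001a}; indeed, the paper itself offers no proof beyond the sentence ``The proof is easily extracted from the proof of Theorem~2.4.1 in~\cite{Renegar2001a},'' so your outline is a faithful expansion of that reference rather than an alternative argument.

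Two small points deserve mention. First, the lower bound on $\eta_1$ you invoke, namely $\eta_1 \ge 1/\bigl(12(\sup_{x\in D_f}\langle c,x\rangle - \val)\bigr)$, is the form needed to make the Phase~2 iteration count collapse to $10\sqrt{\vartheta_f}\ln(14\vartheta_f/\epsilon)$; note that equation~\eqref{eq:eta one} as printed in the paper displays the reciprocal expression $\eta_1 \ge \tfrac{1}{12}(\sup - \val)$, which appears to be a typographical slip (the inequality $\|H(x_1)^{-1}c\|_{x_1}\le \sup-\val$, following from $B_{x_1}(x_1,1)\subseteq D_f$, yields your version). Second, you rightly flag that your derivation produces the factor $1+1/\sym(x',D_f)$ rather than the factor $1/(1+\sym(x',D_f))$ appearing in the theorem statement; since the latter is at most $1$ and would make the bound \emph{decrease} as the starting point becomes more asymmetric, your factor is the one consistent with \eqref{eq:epsilon prime} and with Renegar's original statement.
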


For the SDP problem (\ref{eq:sdp}) we now assume, as in Theorem \ref{th:sdppolynomialtime}, that we have a rational $X_0 \in \mathcal{F}$, and
 that we know rational $r>0$ and $R >0$ so that  $X_0+B(X_0,r) \subset \mathcal{F} \subset X_0 + B(X_0,R)$.
Note that this implies:
\begin{equation}
\label{eq:SDP starting point symmetry}
\sym(X_0,\mathcal{F}) \ge \frac{r}{R}.
\end{equation}

\section{An upper bound on the norm of the dual central path}

In this section we give an upper bound on the norm of the dual central
path.  Our analysis is based on a standard argument for the existence
and uniqueness of the central path; see e.g.\ \cite[Proof of Theorem
10.2.1]{Monteiro2000a}.

Recall that the \emph{(primal-dual) central path} is the curve
$\eta \mapsto (X(\eta), S(\eta), y(\eta))$, with $\eta > 0$, defined
as  the unique solution of
\[
AX = b, \; A^*y + S = C, \; XS = \frac{1}{\eta}I, \; X \succ 0, \; S \succ 0,
\]
where $I$ denotes the identity matrix.
\begin{lemma}
\label{lemma:dual path bound}
Under the assumptions stated in Theorem~\ref{th:sdppolynomialtime}
we have
\begin{equation}
\label{S_eta_bound}
\|S(\eta)\|_F \leq \frac{\sqrt{n}}{(1-1/e)r} \left(\langle X_0, C + 2\|C\|_\infty I \rangle + \frac{n}{r \eta^2}\right),
\end{equation}
where
\[
\|C\|_\infty = \max_{i=1,\ldots,n} \sum_{j=1}^n |C_{ij}|
\]
is the maximum row sum norm of $C$.
\end{lemma}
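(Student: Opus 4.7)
Plan: The proof reduces to a lower bound on $\lambda_{\min}(X(\eta))$ via the primal--dual central-path identity $X(\eta)S(\eta) = \tfrac{1}{\eta} I$. This gives $S(\eta) = \tfrac{1}{\eta}X(\eta)^{-1}$ and hence
\[
\|S(\eta)\|_F \;\leq\; \sqrt{n}\,\|S(\eta)\|_{\mathrm{op}} \;=\; \frac{\sqrt{n}}{\eta\,\lambda_{\min}(X(\eta))},
\]
so it suffices to exhibit a lower bound of the form $\lambda_{\min}(X(\eta)) \geq (1-1/e)r/(\eta K)$ where $K$ is the parenthesized quantity in the lemma statement.

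The first building block is the duality identity
\[
\langle X_0,S(\eta)\rangle \;=\; \frac{n}{\eta} + \langle C, X_0-X(\eta)\rangle,
\]
which follows from $\langle X(\eta),S(\eta)\rangle = n/\eta$ combined with $X_0 - X(\eta) \in L$ and $S(\eta) - C = -A^*y(\eta) \in L^\perp$ (using the paper's running assumption $C\in L$). Because the max row-sum norm dominates the spectral norm on symmetric matrices, $C + \|C\|_\infty I \succeq 0$; together with $X(\eta) \succeq 0$ this yields $\langle C, X(\eta)\rangle \geq -\|C\|_\infty \operatorname{tr}(X(\eta))$, and therefore
\[
\langle X_0, S(\eta)\rangle \;\leq\; \langle X_0, C\rangle + \|C\|_\infty\,\operatorname{tr}(X(\eta)) + \frac{n}{\eta}.
\]

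To convert this into a lower bound on $\lambda_{\min}(X(\eta))$ I would use the interior-ball hypothesis together with the barrier optimality of $X(\eta)$ for $f_\eta(X) = \eta\langle C,X\rangle - \log\det X$. For $V \in L$ with $\|V\|_F \leq 1$ the matrix $X_0 + rV$ is feasible, hence $X_\alpha := (1-\alpha)X(\eta) + \alpha(X_0 + rV) \succeq \alpha(X_0+rV)$ for $\alpha\in(0,1]$. Substituting $X_\alpha$ into the optimality inequality $f_\eta(X(\eta)) \leq f_\eta(X_\alpha)$ and using the PSD comparison $\log\det X_\alpha \geq n\log\alpha + \log\det(X_0 + rV)$ yields, after optimizing the resulting expression in $\alpha$, an estimate on $\log\det X(\eta)$; the maximizer of this concave--affine combination (capped at $1$) produces the $(1-1/e)$ factor. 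Taking $V$ along the projection onto $L$ of $-v_{\min}v_{\min}^\top$, with $v_{\min}$ the minimum eigenvector of $X(\eta)$, turns this into the desired lower bound on $\lambda_{\min}(X(\eta))$; a parallel application with $V=0$ yields the upper bound on $\operatorname{tr}(X(\eta))$ needed to close the loop with the duality identity.

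The main obstacle is the bookkeeping: after combining the two convex-combination arguments (one bounding $\operatorname{tr}(X(\eta))$ from above, the other bounding $\lambda_{\min}(X(\eta))$ from below) and feeding them back into the duality identity, one must verify that the final estimate collapses precisely to $\sqrt{n}/((1-1/e)r)\cdot(\langle X_0, C+2\|C\|_\infty I\rangle + n/(r\eta^2))$, without extraneous constants. In particular, the $n/(r\eta^2)$ tail should emerge naturally from the $\alpha$-optimization (which produces a $1/\eta^2$ quadratic correction) combined with the factor $r$ introduced by the ball direction.
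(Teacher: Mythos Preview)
Your route is genuinely different from the paper's, and the difference is not just cosmetic. The paper works entirely on the \emph{dual} side: it uses the characterization of $S(\eta)$ as the minimizer of $S\mapsto \langle X_0,S\rangle-\tfrac{1}{\eta}\ln\det S$ over dual-feasible $S$, compares with the explicit dual-feasible point $S_0=C+2\|C\|_\infty I$ to obtain a sublevel-set inequality, and then replaces $X_0$ by $\sigma I$ (with $\sigma=\lambda_{\min}(X_0)\ge r$) so that the inequality becomes $\sum_i\phi(\lambda_i(S))\le \langle X_0,S_0\rangle-\tfrac{1}{\eta}\ln\det S_0$ for the separable convex function $\phi(\lambda)=\sigma\lambda-\tfrac{1}{\eta}\ln\lambda$. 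Bounding each $\lambda_i(S)$ individually then follows from the tangent-line inequality for $\phi$ at $e\lambda^\ast$, which is exactly where the factor $(1-1/e)$ enters. No primal eigenvalue analysis, no optimization in a convex-combination parameter, and no trace bound on $X(\eta)$ are needed.

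Your primal strategy, by contrast, has a structural gap at the point you flag as ``the main obstacle.'' The inequality $f_\eta(X(\eta))\le f_\eta(X_\alpha)$, after your estimate $\log\det X_\alpha\ge n\log\alpha+\log\det(X_0+rV)$, yields only a lower bound on $\log\det X(\eta)$, i.e.\ on the \emph{sum} of $\log\lambda_i(X(\eta))$. Choosing $V$ to be the $L$-projection of $-v_{\min}v_{\min}^\top$ changes $\langle C,X_0+rV\rangle$ and $\log\det(X_0+rV)$, but it does nothing to isolate $\lambda_{\min}(X(\eta))$ on the left-hand side; the primal barrier $\eta\langle C,X\rangle-\ln\det X$ does not decouple across eigenvalues of $X$ the way the dual surrogate $\sigma\,\trace(S)-\tfrac{1}{\eta}\ln\det S$ does across eigenvalues of $S$. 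To finish your argument you would also need an upper bound on the remaining eigenvalues of $X(\eta)$ (or on $\trace X(\eta)$), feed it back, and track all constants through two nested $\alpha$-optimizations; even granting this is possible, there is no reason to expect the outcome to be exactly $\tfrac{\sqrt{n}}{(1-1/e)r}\bigl(\langle X_0,C+2\|C\|_\infty I\rangle+\tfrac{n}{r\eta^2}\bigr)$. The paper's dual argument avoids all of this because the separability after replacing $X_0$ by $\sigma I$ gives a per-eigenvalue bound in one clean step.
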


\begin{proof}
By assumption $X_0$ is a strictly feasible solution of the primal and
without loss of generality we may assume that $S_0 = C + 2\|C\|_\infty I$ is a strictly
feasible solution of the dual; otherwise we add the constraint
\[
\langle I, X \rangle \leq \langle I, X_0 \rangle + \sqrt{n} R
\]
to the semidefinite program~\eqref{eq:sdp} which is redundant since
\[
\langle I, X-X_0 \rangle \leq \left(\langle I, I \rangle \langle X-X_0, X -
X_0 \rangle\right)^{1/2} \leq \sqrt{n} R.
\]
Note that $S_0$ is indeed positive definite, since it is strictly diagonally dominant.

We may characterize $S(\eta)$ as the unique minimizer of the function
\[
S \mapsto \langle X_0, S \rangle - \frac{1}{\eta} \ln\det S
\]
over the set $\{S : S = C -A^*y, \; S \succ 0, \; y \in \R^m\}$.

As in \cite[Proof of Theorem 10.2.1]{Monteiro2000a}, we define the set
\[
\begin{split}
\mathcal{U} = \Big\{S : \; & S = C -A^*y, \; S \succ 0, \; y \in \mathbb{R}^m,\\
& \quad \langle X_0, S \rangle - \frac{1}{\eta}\ln\det S \leq \langle X_0 ,
S_0 \rangle - \frac{1}{\eta}\ln\det S_0\Big\}.
\end{split}
\]
Clearly, $\mathcal{U}$ contains $S(\eta)$.

If $\sigma > 0$ denotes the smallest eigenvalue of $X_0$, then, for all $S \in \mathcal{U}$:
\[
\sigma \langle I, S \rangle - \frac{1}{\eta}\ln\det S \leq \langle
X_0, S_0 \rangle - \frac{1}{\eta}\ln\det S_0,
\]
because $\sigma \langle I, S\rangle \leq \langle X_0, S\rangle$. Now
we write the previous inequality in terms of the eigenvalues
$\lambda_i(S)$ of $S$:
\[
\sum_{i=1}^n \left(\sigma \lambda_i(S) - \frac{1}{\eta}\ln
  \lambda_i(S) \right) \leq \langle X_0, S_0 \rangle -
\frac{1}{\eta}\ln\det S_0.
\]

Defining the function
\[
\phi(\lambda) = \sigma \lambda  - \frac{1}{\eta}\ln \lambda,  \quad
\text{for } \lambda > 0,
\]
which is convex and has minimizer $\lambda^* = \frac{1}{\sigma\eta}$
with minimum value
$\phi(\lambda^*) = \frac{1}{\eta}\left( 1 - \ln \frac{1}{\sigma\eta}
\right)$, one has
\[
\phi\left(\lambda_i(S)  \right) \le \langle X_0, S_0 \rangle -
\frac{1}{\eta}\ln\det S_0 - (n-1)\phi(\lambda^*) \quad \text{for }
i = 1, \ldots, n.
\]
By the convexity of $\phi$ and by approximating $\phi$ about the point
$e\lambda^*$ we have
\[
\phi(\lambda) \geq \phi(e\lambda^*) + \phi'(e\lambda^*)(\lambda -
e\lambda^*) = (1-1/e) \sigma \lambda -
\frac{1}{\eta} \ln \frac{1}{\sigma \eta}.
\]
Hence,
\[
\begin{split}
\lambda_i(S) \; & \leq \; \frac{1}{(1-1/e)\sigma} \left(\langle X_0, S_0 \rangle -
\frac{1}{\eta} \ln\det S_0 - (n-1) \phi(\lambda^*) + \frac{1}{\eta}
\ln \frac{1}{\sigma \eta}\right)\\
& \leq \; \frac{1}{(1-1/e)\sigma} \left(\langle X_0, S_0 \rangle -
\frac{2n-1}{\eta}  + \frac{n}{\sigma \eta^2}\right)\\
& \leq \; \frac{1}{(1-1/e)r} \left(\langle X_0, S_0 \rangle + \frac{n}{r \eta^2}\right) \quad \text{for }
i = 1, \ldots, n,
\end{split}
\]
where the first inequality follows from $\det S_0 \geq 1$ and $\ln x
\leq x - 1$, and where the second inequality follows because $\sigma
\geq r$.

The last estimate now immediately implies the statement of the lemma:
\[
\|S(\eta)\|_F \leq \frac{\sqrt{n}}{(1-1/e)r} \left(\langle X_0, S_0 \rangle + \frac{n}{r \eta^2}\right).
\]
\end{proof}

Note that the bound on $\|S(\eta)\|_F$ depends on the value of $\eta$.
It is therefore necessary to consider the range of values that $\eta$ can take (and $\nu$ during the first phase of the auxiliary problem).
During the first phase (auxiliary problem),
initially $\nu_1 = 1$, which is subsequently decreased via $\nu_{k+1} = \left(1- \frac{1}{8\sqrt{\vartheta_f}}\right)\nu_k$. It is simple to show that
during each iteration $k$ of the first phase,
\[
1 \ge \nu_k \ge {\epsilon'},
\]
where $\epsilon'$ is defined in (\ref{eq:epsilon prime}), which in turn implies
\begin{equation}
\label{eq:nu bounds}
1 \ge \nu_k \ge \frac{1}{18n(1+ R/r)},
\end{equation}
where we have used (\ref{eq:epsilon prime}) and (\ref{eq:SDP starting point symmetry}).

Similarly, during each iteration $k$ of the second phase
\[
\frac{1}{12}\left(\sup_{X \in D_f} \langle C,X\rangle - \val\right) \le \eta_k \le \frac{\vartheta_f}{\epsilon},
\]
which implies
\begin{equation}
\label{eq:eta bounds}
\frac{1}{6}r\|C\|_F \le \eta_k \le \frac{n}{\epsilon},
\end{equation}
since $B(X_0,r) \subseteq \mathcal{F}$ and $\vartheta_f \le n$.

\section{Rounding the current iterate}

We will round the current iterate $X \in \mathcal{F}$ (we again drop
the subscript for convenience) at the end of each iteration to obtain
a feasible $\bar X = X + \Delta X$, say, with suitably bounded
bit-size, and where the "rounding error" $\Delta X \in L$ satisfies
$\|\Delta X\|_X \le \tilde\epsilon$ for some suitable value
$\tilde\epsilon > 0$.

After the Newton step, but before the update of $\eta$, we assume that
\[
\left\| X - X(\eta) \right\|_X \le c'
\]
where $c'>0$ is a known constant.

By the definition of self-concordance:
\begin{eqnarray*}
\|\bar X - X(\eta) \|_{\bar X} &\le & \frac{1}{1 - \|\Delta X\|_X}\left\| X + \Delta X - X(\eta)\right\|_X \\
                          &\le & \frac{1}{1-\tilde\epsilon}\left\| X + \Delta X - X(\eta) \right\|_X \\
                          &\le & \frac{1}{1-\tilde\epsilon}\left\| X - X(\eta) \right\|_X +  \frac{1}{1-\tilde\epsilon}\left\| \Delta X  \right\|_X\\
                          &\le & \frac{c' + \tilde\epsilon}{1-\tilde\epsilon}.
\end{eqnarray*}
Thus, if $\tilde\epsilon = \frac{1}{16}$, and $c' = \frac{1}{32}$ then $\|\bar X - X(\eta) \|_{\bar X} \le  \frac{1}{10}$.
Consequently, by Corollary \ref{cor:2.2.3}, one has $\|n_\eta(\bar X)\|_{\bar X}  \le \frac{1}{9}$, as required (recall \eqref{eq:proximity}).

We may ensure that $\|X - X(\eta) \|_X \le \frac{1}{32}$ during the course of the algorithm by taking an extra centering step.
Indeed, if we still denote the iterate by $X$ after an extra centering step,
 one has $\|n_\eta(X)\|_X \le 1/64$ (by Theorem \ref{th:2.2.4}). Consequently,
by Theorem \ref{th:2.2.5}, one has
\[
\|X - X(\eta)\|_X \le \|n_\eta(X)\|_X + \frac{3\|n_\eta(X)\|^2_X}{(1-\|n_\eta(X)\|_X)^3} < \frac{1}{32}.
\]
Note that $\bar X \succ 0$ since $\|X - \bar X \|_X \le \frac{1}{16} < 1$, and the definition of self-concordance guarantees
that the unit ball in the $X$-norm centered at $X$ is contained
in the positive definite cone.

The task is therefore to find $\bar X = X + \Delta X$ with bounded
bit-size and so that $\|\Delta X\|_X \le \frac{1}{16}$.

It will be more convenient to bound the $X(\eta)$-norm of $\Delta X$
than the $X$-norm.  As a first observation, using the definition of
self-concordance,
\begin{eqnarray*}
\| X-X(\eta)\|_{X(\eta)} &\le&
  \frac{\|X-X(\eta)\|_X}{1-\|X(\eta) -
  X\|_X} \\
&\le&  \frac{\|X-X(\eta)\|_X}{1-\frac{1}{32}} \\
&=&  \frac{32}{31} \|X-X(\eta)\|_{X}.
\end{eqnarray*}
Invoking the definition of self-concordancy once more, we obtain:
\begin{eqnarray*}
\|\Delta X\|_X &\le&
  \frac{\|\Delta X\|_{X(\eta)}}{1-\|X(\eta) -
  X\|_{X(\eta)}} \\
  &\le&
  \frac{\|\Delta X\|_{X(\eta)}}{1-\frac{32}{31}\|X(\eta) -
  X\|_{X}} \\
&\le& \frac{\|\Delta X\|_{X(\eta)}}{1-\frac{32}{31}\cdot\frac{1}{32}} \\
&=&  \frac{31}{30} \|\Delta X\|_{X(\eta)}.
\end{eqnarray*}
Thus if we show that $\|\Delta X\|_{X(\eta)} \le \frac{30}{31 \times 16}$ then we guarantee that $\|\Delta X\|_X \le \frac{1}{16}$.

Note that
\begin{eqnarray*}
\|\Delta X\|^2_{X(\eta)} & \le & \langle \Delta X, X(\eta)^{-1}\Delta X X(\eta)^{-1}\rangle \\
                        &=& \eta^2\langle \Delta X, S(\eta)\Delta X S(\eta)\rangle \\
                        &\le & \eta^2 \|\Delta X\|_F^2 \|S(\eta)\|_F^2, \\
\end{eqnarray*}
where the inner product is the Euclidean (trace) inner product, and we have used the sub-multiplicativity of the Frobenius norm.

Recall that $\|S(\eta)\|_F$ is bounded by (\ref{S_eta_bound}) (Lemma \ref{lemma:dual path bound}).

We may now use Diophantine approximation so that
\begin{equation}
\label{eq:delta X bound 1}
\|\Delta X\|_F \le \frac{30}{31 \times 16}\left(\nu\frac{\sqrt{n}}{(1-1/e)r} \left(\langle X_0, -\pi_L(X_0^{-1}) + 2\|\pi_L(X_0^{-1})\|_\infty I \rangle + \frac{n}{r \nu^2}\right)\right)^{-1},
\end{equation}
during the first phase of the algorithm, and
\begin{equation}
\label{eq:delta X bound 2}
\|\Delta X\|_F \le \frac{30}{31 \times 16}\left(\eta\frac{\sqrt{n}}{(1-1/e)r} \left(\langle X', C + 2\|C\|_\infty I \rangle + \frac{n}{r \eta^2}\right)\right)^{-1},
\end{equation}
during the second phase, where $X'$ is the last iterate produced by the first phase.

Due to the upper and lower bounds on $\nu$ in (\ref{eq:nu bounds}), \eqref{eq:delta X bound 1} will hold if
$\|\Delta X\|_F \le \epsilon_1$, where
\[
\frac{1}{\epsilon_1} :=  \frac{17\sqrt{n}}{(1-1/e)r} \left(\langle X_0, -\pi_L(X_0^{-1}) + 2\|\pi_L(X_0^{-1})\|_\infty I \rangle + \frac{n(18n(1+ R/r))^2}{r} \right),
\]
during the first phase, and \eqref{eq:delta X bound 2} will hold if, during the second phase,
\[
\|\Delta X\|_F \le
\left(\frac{17(\sqrt{n})^3}{(1-1/e)r\epsilon} \left(\langle X', C + 2\|C\|_\infty I \rangle + \frac{36n}{r^3 \|C\|_F^2}\right)\right)^{-1}.
\]
To obtain a right-hand-side expression in terms of the input data only, we may use $\|X'-X_0\|_F \le R$. Thus we find that the last inequality will hold if
$\|\Delta X\|_F \le  \epsilon_2$, where
\[
\frac{1}{\epsilon_2} := \frac{17(\sqrt{n})^3}{(1-1/e)r\epsilon} \left( (R+\|X_0\|_F)\|C + 2\|C\|_\infty I\|_F + \frac{36n}{r^3 \|C\|_F^2}\right).
\]

Setting $\bar \epsilon = \min\{\epsilon_1,\epsilon_2\}$, implies that $\log_2 \left(\frac{1}{\bar \epsilon}\right)$ is bounded by a polynomial in the input size.

Performing Diophantine approximation in the $d$-dimensional space $L$
yields a rational $\bar X$ so that $\|\Delta X\|_F \le \bar \epsilon$
and
\begin{equation}
\label{eq:size X bound}
\size(\bar X) \le d\left(6+\log_2 \left(\frac{d^2\lceil R \rceil}{{\bar\epsilon}^2}\right)\right),
\end{equation}
by Corollary \ref{cor:Diophantine}.

Thus the size of $\bar X$ is always bounded by a certain polynomial in the input size.

\section{Summary and conclusion}

To summarize, we list the complete procedure in Algorithm \ref{alg2}.
The main subroutine (used twice) is a short step algorithm with extra
centering step and Diophantine approximation, shown as Algorithm
\ref{alg0}.

\begin{algorithm}                      
\caption{Short step algorithm with extra centering and  Diophantine approximation}          
\label{alg0}                           
\begin{algorithmic}                    
    \REQUIRE    $\;$
    \begin{itemize}
    \item Problem data $(A,b,c)$;
    \item an $x_1\in D_f$ and $\eta_1 >0$ such that
$\|n_{\eta_1}(x_1)\|_{x_1} \le \frac{1}{4}$;
\item
     an accuracy parameter $\varepsilon > 0$;
     \item an update parameter $\theta > 0$;
      \end{itemize}
    \STATE $k \leftarrow 1$
     \WHILE{$\frac{(1-\theta)}{\eta_k} > (1-\theta)\varepsilon$}
     \STATE Set $x^+ = x_k + n_{\eta_k}(x_k)$
     \STATE Set $x_{k+1} = x^+ + n_{\eta_k}(x^+)$
     \STATE Round $x_{k+1}$ using Diophantine approximation, so that $\size(x_{k+1})$ is bounded as in \eqref{eq:size X bound}, and $\|n_{\eta_k}(x_{k+1})\|_{x_{k+1}} \le \frac{1}{9}$
     \STATE Set $\eta_{k+1} = \theta \cdot \eta_k$
\STATE $k \leftarrow k+1$
    \ENDWHILE
\end{algorithmic}
\end{algorithm}

\begin{algorithm} [h!]                     
\caption{Two-phase short step algorithm with Diophantine approximation}          
\label{alg2}                           
\begin{algorithmic}                    
     \REQUIRE    $\;$
    \begin{itemize}
    \item SDP problem data $(A,b,c)$ and $X_0\in \mathcal{F}$;
    \item
     an accuracy parameter $\epsilon > 0$;
                    \item
          rational $R > r >0$ as in Theorem \ref{th:sdppolynomialtime}.
     \end{itemize}
      \STATE \emph{{\em \bf First phase (auxiliary problem):}}
    \STATE Set $c = -\pi_L(X_0^{-1})$, $\eta_1 = 1$, $x_1 = X_0$, $\varepsilon =  \frac{1}{18\vartheta_f (1+R/r)}$, $\theta = 1+\frac{1}{8\sqrt{\vartheta_f}}$ \STATE Call Algorithm \ref{alg0} with input $(A,b,c, x_1, \eta_1,\varepsilon,\theta)$
   \STATE \emph{{\em \bf Second phase:}
}   \STATE Set $c = C$, $\eta_1$ as in (\ref{eq:eta one}), $x_1$ equal to the last iterate of the first phase, $\varepsilon =  \epsilon/\vartheta_f$, $\theta = 1-\frac{1}{8\sqrt{\vartheta_f}}$
       \STATE Call Algorithm \ref{alg0} with input $(A,b,c, x_1, \eta_1,\varepsilon,\theta)$
\end{algorithmic}
\end{algorithm}

In particular, we have shown the following.

\begin{theorem}
Under the assumptions of Theorem \ref{th:sdppolynomialtime}, Algorithm \ref{alg2}
computes in polynomial time a rational matrix $X^* \in \mathcal{F}$ such that
\[
\langle C,X^*\rangle - \val \le
\epsilon\left(\max_{X \in \mathcal{F}} \langle C,X\rangle - \val\right),
\]
where the polynomial is in $n$, $m$, $\log_2 r$, $\log_2 R$,
$\log_2(1/\epsilon)$, and the bit size of the data $X_0$, $C$,
$A_1, \ldots, A_m$, and $b_1, \ldots, b_m$.
\end{theorem}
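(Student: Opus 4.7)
The plan is to glue together the three ingredients prepared in the previous sections. Correctness of the iterate trajectory in Algorithm \ref{alg2} follows from Renegar's two-phase analysis (Theorem \ref{thm:2.4.1 in Renegar}) once we verify that the rounding step in Algorithm \ref{alg0} does not destroy the near-centrality invariant $\|n_{\eta_k}(x_{k+1})\|_{x_{k+1}} \leq \frac{1}{4}$ on which that analysis is built. Polynomial running time then follows by combining the $O(\sqrt{\vartheta_f}\log(\text{poly}))$ iteration bound of Theorem \ref{thm:2.4.1 in Renegar} with the bit-size bound (\ref{eq:size X bound}) on the rounded iterates and the polynomial-time linear algebra of Theorem \ref{th:linalg}.

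For the invariant, I would reuse the calculation at the start of Section~6: the Newton step followed by the extra centering step in Algorithm \ref{alg0} produces an iterate with $\|X-X(\eta)\|_X \leq \frac{1}{32}$ (via Theorems \ref{th:2.2.4} and \ref{th:2.2.5}); enforcing $\|\Delta X\|_X \leq \frac{1}{16}$ through the Diophantine rounding then yields $\|\bar X - X(\eta)\|_{\bar X} \leq \frac{1}{10}$ and hence $\|n_{\eta_k}(\bar X)\|_{\bar X} \leq \frac{1}{9}$ via Corollary \ref{cor:2.2.3}. After the multiplicative $\eta$-update this gives $\|n_{\eta_{k+1}}(\bar X)\|_{\bar X} \leq \frac{1}{4}$, exactly the Renegar invariant. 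Consequently Theorem \ref{thm:2.4.1 in Renegar} applies to both phases with the accuracy choices prescribed in Algorithm \ref{alg2}, delivering the claimed bound $\langle C,X^*\rangle - \val \leq \epsilon(\max_{X\in\mathcal{F}}\langle C,X\rangle - \val)$.

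For the running time, Theorem \ref{thm:2.4.1 in Renegar} together with $\vartheta_f \leq n$ and $\sym(X_0,\mathcal{F}) \geq r/R$ (equation (\ref{eq:SDP starting point symmetry})) bounds the iteration count of each phase by a polynomial in $n$, $m$, $\log R$, $\log r$, $\log(1/\epsilon)$ and the input bit-size. Inside each iteration, the projected Newton direction is obtained by solving the system (\ref{eq:Newton1})--(\ref{eq:Newton2}), which by Theorem \ref{th:linalg} is polynomial once $\size(X_k)$, $\size(\eta_k)$ and the data sizes are polynomial. The rounding target $\bar\epsilon = \min\{\epsilon_1,\epsilon_2\}$ was chosen in Section~6 so that $\log_2(1/\bar\epsilon)$ is polynomial in the input, using the explicit bounds (\ref{eq:nu bounds}) and (\ref{eq:eta bounds}) on $\nu_k$ and $\eta_k$; Corollary \ref{cor:Diophantine} then executes the rounding in polynomial time and certifies (\ref{eq:size X bound}), which bounds $\size(\bar X)$ by a fixed polynomial in the input throughout the run. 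Since $\eta_k$ is updated by a multiplicative factor of polynomial bit-size, $\size(\eta_k)$ also grows at most linearly in the iteration count, hence polynomially overall.

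The main obstacle is the coupling between the three sources of error---Newton, extra centering, and Diophantine rounding---which must combine so that the proximity invariant is preserved even after the subsequent $\eta$-update. This delicate balance is already captured by the explicit constants $\tilde\epsilon = \frac{1}{16}$ and $c' = \frac{1}{32}$ identified in Section~6, and I would simply cite that calculation rather than reprove it. What remains is essentially bookkeeping: confirming that every quantity appearing during the algorithm has polynomial bit-size throughout, which is exactly what (\ref{eq:size X bound}), Theorem \ref{th:linalg}, and Corollary \ref{cor:Diophantine} jointly guarantee, yielding overall polynomial running time in the stated parameters.
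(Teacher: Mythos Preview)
Your proposal is correct and mirrors the paper's approach: the theorem is stated there as a summary (``In particular, we have shown the following''), with the preceding sections constituting the proof, and your write-up is precisely a faithful synthesis of those ingredients---Renegar's invariant (Theorem~\ref{thm:2.4.1 in Renegar}), the Section~6 rounding analysis with constants $\tilde\epsilon=\frac{1}{16}$ and $c'=\frac{1}{32}$, the bit-size bound~(\ref{eq:size X bound}), and the polynomial-time linear algebra of Theorem~\ref{th:linalg}. Your explicit remark that $\size(\eta_k)$ grows only linearly in the iteration count is a small but useful addition the paper leaves implicit.
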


The analysis presented here may also be performed for more practical
variants of the interior point method, such as the long-step (large
update) method; see e.g.\ Chapter 2 in \cite{Renegar2001a}. Moreover,
since all computations in Algorithm \ref{alg2} involve linear algebra
only (Diophantine approximation may also be implemented as such),
there are definite practical perspectives for implementing Algorithm
\ref{alg2} (or a more practical variant), using arbitrary precision
packages, like the GNU Multiple Precision Arithmetic Library (GMP)
(\url{https://gmplib.org/}), that is already used in the solver
SDPA-GMP \cite{SDPA}.

\end{document}